\newtheorem{theorem}{Theorem}[section]
\newtheorem{lemma}[theorem]{Lemma}
\newtheorem{conjecture}[theorem]{Conjecture}
\newtheorem{definition}[theorem]{Definition}
\newtheorem{construction}[theorem]{Construction}
\newtheorem{claim}[theorem]{Claim}
\newtheorem{question}[theorem]{Question}
\newtheorem{fact}[theorem]{Fact}
\newcommand{\ma}{\mathcal}
\newcommand{\mr}{\mathscr}
\newcommand{\ns}{\nsubseteq}
\newcommand{\fr}{\frac}
\newcommand{\lc}{\lceil}
\newcommand{\rc}{\rceil}
\newcommand{\lf}{\lfloor}
\newcommand{\rf}{\rfloor}
\newcommand{\ep}{\epsilon}
\newcommand{\e}{{\rm{ex}}}
\newcommand{\blue}{\textcolor{blue}}
\begin{document}

\title{Asymptotically sharp bounds for cancellative and union-free hypergraphs}

\date{}

\makeatletter
\def\thanks#1{\protected@xdef\@thanks{\@thanks
        \protect\footnotetext{#1}}}
\makeatother

\author{Miao Liu, Chong Shangguan, and Chenyang Zhang \thanks{The authors are with Research Center for Mathematics and Interdisciplinary Sciences, Shandong University, Qingdao 266237, China, and Frontiers Science Center for Nonlinear Expectations, Ministry of Education, Qingdao 266237, China (Emails: liumiao10300403@163.com, theoreming@163.com, lxzcyang@163.com)}
}

\maketitle

\begin{abstract}
    \noindent An $r$-graph is called $t$-cancellative if for arbitrary $t+2$ distinct edges $A_1,\ldots,A_t,B,C$, it holds that $(\cup_{i=1}^t A_i)\cup B\neq (\cup_{i=1}^t A_i)\cup C$; it is called $t$-union-free if for arbitrary two distinct subsets $\mathcal{A},\mathcal{B}$, each consisting of at most $t$ edges, it holds that $\cup_{A\in\mathcal{A}} A\neq \cup_{B\in\mathcal{B}} B$. Let $C_t(n,r)$ and $U_t(n,r)$ denote the maximum number of edges that can be contained in an $n$-vertex $t$-cancellative and $t$-union-free $r$-graph, respectively. The study of $C_t(n,r)$ and $U_t(n,r)$ has a long history, dating back to the classic works of Erd\H{o}s and Katona, and Erd\H{o}s and Moser in the 1970s. In 2020, Shangguan and Tamo showed that $C_{2(t-1)}(n,tk)=\Theta(n^k)$ and $U_{t+1}(n,tk)=\Theta(n^k)$ for all $t\ge 2$ and $k\ge 2$. In this paper, we determine the asymptotics of these two functions up to a lower order term, by showing that for all $t\ge 2$ and $k\ge 2$,
    \begin{align*}
        \text{$\lim_{n\rightarrow\infty}\frac{C_{2(t-1)}(n,tk)}{n^k}=\lim_{n\rightarrow\infty}\frac{U_{t+1}(n,tk)}{n^k}=\frac{1}{k!}\cdot \frac{1}{\binom{tk-1}{k-1}}$.}
    \end{align*}
    Previously, it was only known by a result of F\"uredi in 2012 that $\lim_{n\rightarrow\infty}\frac{C_{2}(n,4)}{n^2}=\frac{1}{6}$.

    To prove the lower bounds of the limits, we utilize a powerful framework developed recently by Delcourt and Postle, and independently by Glock, Joos, Kim, K\"uhn, and Lichev, which shows the existence of near-optimal hypergraph packings avoiding certain small configurations, and to prove the upper bounds, we apply a novel counting argument that connects $C_{2(t-1)}(n,tk)$ to a classic result of Kleitman and Frankl on a special case of the famous Erd\H{o}s Matching Conjecture.
\end{abstract}

\section{Introduction}

\noindent Given a configuration $\mr{L}$ defined by a family of $r$-uniform hypergraphs ($r$-graphs for short), an $r$-graph is said to be {\it $\mr{L}$-free} if it contains no copy of any member in $\mr{L}$. The {\it extremal number} $\e_r(n,\mr{L})$ is the maximum number of edges that can be contained in a $\mr{L}$-free $r$-graph on $n$ vertices. The study of extremal numbers of hypergraphs has been playing a central role in extremal combinatorics since the influential work of Tur\'an \cite{Turan-theorem}. Katona, Nemetz, and Simonovits \cite{Katona-Nemetz-Simonovits-Turan-density} showed that for every $\mr{L}$, the {\it Tur\'an density}  $\pi_r(\mr{L}):=\lim_{n\rightarrow\infty}\frac{\e_r(n,\mr{L})}{\binom{n}{r}}$ always exists. If $\pi_r(\mr{L})>0$ then we call $\mr{L}$ {\it non-degenerate}; otherwise, we call it {\it degenerate}. For a degenerate $\mr{L}$, if there exists a real number $\alpha\in(0,r)$ such that the limit $$\pi_r^d(\mr{L}):=\lim_{n\rightarrow\infty}\frac{\e_r(n,\mr{L})}{n^{\alpha}}$$ exists, then we call $\alpha$ and $\pi_r^d(\mr{L})$ the {\it Tur\'an exponent} and the {\it degenerate Tur\'an density} of $\mr{L}$-free $r$-graphs, respectively.

Unlike non-degenerate Tur\'an densities, even if we know the Tur\'an exponent of a degenerate $\mr{L}$, it is still a difficult task to determine whether $\pi_r^d(\mr{L})$ exists, not to mention computing its precise value. For example, when $r=2$, it is well-known that $\e_2(n,C_{2k})=\Theta(n^{1+1/k})$ for $k\in\{2,3,5\}$ (see \cite{wenger1991extremal, conlon2021extremal, bollobas2013modern}), and $\e_2(n,K_{s,t})=\Theta(n^{2-1/s})$ for $t\ge s=2,3$ and $t\ge (s-1)!+1$ (see \cite{kollar1996norm, alon1999norm}); however, it is only known that $\lim_{n\rightarrow\infty}\frac{\e_2(n,C_4)}{n^{3/2}}=\lim_{n\rightarrow\infty}\frac{\e_2(n,K_{2,t})}{n^{3/2}}$ exists and has precise value $\frac{1}{2}$. The reader is referred to \cite{Furedi-Simonovits-history} for a comprehensive survey for the case of $r=2$. For $r\ge 3$, a notable example in this direction is the celebrated work of R\"odl \cite{Rodl-nibble}, which asymptotically determines the maximum cardinality of an $(n,r,k)$-packing, where an {\it $(n,r,k)$-packing} is an $r$-graph on $n$ vertices such that every pair of distinct edges have less than $k$ common vertices. In the literature, there are several other works with similar flavor, including cover-free families \cite{Erdos-Frankl-Furedi-r-cover-free,Frankl-Furedi-colored-packing}, disjoint systems \cite{Alon-Sudakov-1995-disjoint-systems}, and $(v,e)$-free hypergraphs \cite{Glock64,Glock-Kim-Lichev-Pikhurko-Sun-bes,Letzter-Sgueglia-23,Shangguan-Tamo-degenerate}.



The main concern of this paper is to study the degenerate Tur\'an densities of cancellative and union-free hypergraphs, as defined below. For fixed positive integers $r,t$, an $r$-graph $\mathcal{H}$ is called

\begin{itemize}

    \item {\it $t$-cancellative}, if for arbitrary $t+2$ distinct edges $A_1,\ldots,A_t,B,C\in\mathcal{H}$, it holds that $(\cup_{i=1}^t A_i)\cup B\neq (\cup_{i=1}^t A_i)\cup C$;

    \item {\it $t$-union-free}, if for arbitrary two distinct subsets $\ma{A},\ma{B}\subseteq\mathcal{H}$, each consisting of at most $t$ edges, it holds that $\cup_{A\in\ma{A}} A\neq \cup_{B\in\ma{B}} B$.
\end{itemize}


\noindent Let $C_t(n,r)$ and $U_t(n,r)$ denote the maximum number of edges that can be contained in an $n$-vertex $t$-cancellative and $t$-union-free $r$-graph, respectively. For a large set of parameters, we are able to determine the asymptotics of $C_t(n,r)$ and $U_t(n,r)$ up to a lower order term.


\subsection{Cancellative $r$-graphs}

\noindent Erd\H{o}s and Katona \cite{Erdos-Katona-1-canc} initiated the study of $1$-cancellative hypergraphs in the 1970s. This notion has an interesting connection with coding theory (see \cite{tolhuizen-1-canc-low}) and is closely related to several branches of extremal combinatorics. It has motivated many follow-up works in related fields; see \cite{tolhuizen-1-canc-low,shearer1996new,Korner-Sinaimeri-2-canc,pikhurko2008exact,Frankl-Furedi-1-canc-upp,Shangguan-Tamo-canc-and-union-free,Furedi-2-canc,Keevash-Mubayi-canc,Liu-canc,Ni-Liu-Kang-canc} for some examples. Since every $r$-partite $r$-graph is $1$-cancellative, we have
\begin{align*}
    C_1(n,r)\ge\left\lfloor\frac{n}{r}\right\rfloor\times\left\lfloor\frac{n+1}{r}\right\rfloor\times\cdots\left\lfloor\frac{n+r-1}{r}\right\rfloor:=p(n,r),
\end{align*}

\noindent where $p(n,r)$ is the number of edges in a balanced complete $r$-partite $r$-graph on $n$ vertices. For $r=2$, Mantel's theorem \cite{Mantel-K_3} implies that $C_1(n,2)=\e_2(n,K_3)=p(n,2)$. For $r=3,4$, Bollob\'as \cite{bollobas1974three} and Sidorenko \cite{sidorenko1987maximal} showed that $C_1(n,3)=p(n,3)$ and $C_1(n,4)=p(n,4)$, respectively. Frankl and F\"uredi \cite{Frankl-Furedi-1-canc-upp} showed that $C_1(n,r)=p(n,r)$ for all $2r\ge n\ge r$. However, Shearer \cite{shearer1996new} showed that the equality was no longer true for $r\ge 11$ and sufficiently large $n$. Currently, it is known that $$\fr{0.28}{2^r}\binom{n}{r}<C_1(n,r)\le\fr{2^r}{\binom{2r}{r}}\binom{n}{r},$$
where the lower and upper bounds were due to Tolhuizen \cite{tolhuizen-1-canc-low} and Frankl and F\"uredi \cite{Frankl-Furedi-1-canc-upp}, respectively.

K\"orner and Sinaimeri \cite{Korner-Sinaimeri-2-canc} first studied 2-cancellative hypergraphs, and F\"uredi \cite{Furedi-2-canc} introduced the general definition for $t$-cancellative hypergraphs. For even $r$, F\"uredi \cite{Furedi-2-canc} showed that $C_2(n,4)=\frac{n^2}{6}+o(n^2)$, and more generally,
\begin{align}\label{eq:2-canc}
    \frac{n^k}{(2k)^k}-o(n^k)\le C_2(n,2k)\le\frac{\binom{n}{k}}{\binom{2k-1}{k-1}}.
\end{align}
\noindent For odd $r$, it is known that $n^{k-o(1)}<C_2(n,2k-1)=O(n^{k})$ (see \cite{Shangguan-Tamo-canc-and-union-free} for the lower bound and \cite{Furedi-2-canc} for the upper bound).

In a recent work, Shangguan and Tamo \cite{Shangguan-Tamo-canc-and-union-free} showed that for all $t\ge 2$ and $r\ge 3$,
\begin{align}\label{eq:ST-canc-bound}
    \Omega(n^{\lf\frac{2r}{t+2}\rf+\frac{2r(\bmod t+2)}{t+1}})=C_t(n,r)=O(n^{\lceil\frac{r}{\lfloor t/2\rfloor+1}\rceil}),
\end{align}

\noindent which implies that $C_{2(t-1)}(n,tk)=\Theta(n^k)$, thereby determining the  Tur\'an exponent of $C_{2(t-1)}(n,tk)$. It is therefore an interesting problem to study whether the degenerate Tur\'an density $\lim_{n\rightarrow\infty}\frac{C_{2(t-1)}(n,tk)}{n^k}$ exists. Indeed, for $t=2$, F\"uredi \cite{Furedi-2-canc} suspected that the upper bound in \eqref{eq:2-canc} might be close to the truth, and he proved it for the special case $C_2(n,4)$.

We confirm F\"uredi's prediction for all $C_2(n,2k)$ with $k\ge 3$. More generally, we completely determine the degenerate Tur\'an density of $C_{2(t-1)}(n,tk)$.

\begin{theorem}\label{thm:main-canc}
    For all $t\ge 2$ and $k\ge 2$,
    \begin{align}\label{eq:canc}
        \lim_{n\rightarrow\infty}\frac{C_{2(t-1)}(n,tk)}{n^k}=\frac{1}{k!}\cdot \frac{1}{\binom{tk-1}{k-1}}.
    \end{align}
\end{theorem}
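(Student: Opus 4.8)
The plan is to prove the matching upper and lower bounds separately, as the abstract indicates these require quite different tools.

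For the \textbf{upper bound} $\lim_{n\to\infty}\frac{C_{2(t-1)}(n,tk)}{n^k}\le\frac{1}{k!}\cdot\frac{1}{\binom{tk-1}{k-1}}$, I would first unpack what $2(t-1)$-cancellativity forces on an $tk$-graph $\mathcal{H}$. The key structural observation is that the cancellation condition, applied to $2(t-1)+2=2t$ edges, should prevent too many edges from overlapping heavily on small sets of vertices. The abstract explicitly tells us the intended route: reduce the problem to the Kleitman--Frankl theorem on a special case of the Erd\H{o}s Matching Conjecture. Concretely, I would fix a $k$-subset $S$ of the vertex set and count the edges of $\mathcal{H}$ in some controlled way relative to $S$; the goal is to show that the ``link'' or trace structure at scale $k$ behaves like a matching-type family whose size Kleitman--Frankl bounds. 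The natural normalization $\frac{1}{k!}\binom{tk-1}{k-1}^{-1}$ strongly suggests a double-counting identity: count pairs $(S, A)$ where $S$ is a $k$-set and $A\in\mathcal{H}$ contains $S$ (or relates to $S$ in a prescribed way), then bound the number of edges through each $S$ using cancellativity, and compare against $\binom{n}{k}\sim n^k/k!$. The factor $\binom{tk-1}{k-1}$ presumably emerges because each edge of size $tk$ contains $\binom{tk}{k}$ many $k$-subsets, and the cancellation structure limits how these can be shared.

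For the \textbf{lower bound} $\lim_{n\to\infty}\frac{C_{2(t-1)}(n,tk)}{n^k}\ge\frac{1}{k!}\cdot\frac{1}{\binom{tk-1}{k-1}}$, I would invoke the near-optimal packing framework of Delcourt--Postle and of Glock--Joos--Kim--K\"uhn--Lichev, as the abstract signals. The strategy is to build an auxiliary $tk$-uniform ``host'' structure whose edges are candidate edges of our $t$-cancellative hypergraph, and then extract a packing that is both nearly perfect (hitting almost all of the relevant $k$-sets) and free of the small forbidden configurations that would violate $2(t-1)$-cancellativity. The packing theorems guarantee a subfamily of density approaching the fractional optimum while avoiding any fixed finite list of bad local configurations, provided one verifies the technical hypotheses (uniform codegree conditions and a finite set of forbidden configurations of bounded size). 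The main work here is to (i) identify precisely which bounded configurations encode a violation of $2(t-1)$-cancellativity, (ii) confirm there are only finitely many such configurations up to isomorphism and each has bounded size, and (iii) check the codegree/regularity conditions so the packing theorem applies and yields density $\to\frac{1}{k!}\binom{tk-1}{k-1}^{-1}$.

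The \textbf{main obstacle} I anticipate is the upper bound's reduction to Kleitman--Frankl: translating the global cancellativity condition into a clean local matching-type constraint on traces at scale $k$ is delicate, because cancellativity is a condition on $2t$ edges simultaneously rather than a simple pairwise intersection bound. I would need to argue that if too many edges pass through (or trace onto) a common $k$-set, one can extract the $2t$ edges $A_1,\dots,A_{2(t-1)},B,C$ witnessing a forbidden union-collision, which is exactly where a matching in the appropriate residual family must be avoided. Getting the constant exactly right—rather than off by a lower-order factor—requires the counting to be tight, so I expect the careful bookkeeping of shared $k$-sets, and the precise form of the Kleitman--Frankl bound that matches $\binom{tk-1}{k-1}$, to be the crux. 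For the lower bound, the conceptual content is largely packaged inside the black-box packing theorem, so the risk there is mainly verifying hypotheses rather than devising new ideas.
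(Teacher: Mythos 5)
Your high-level reading of the abstract is accurate, but both halves of your plan have a genuine gap when you try to execute them.

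For the lower bound, applying the Delcourt--Postle / Glock--Joos--Kim--K\"uhn--Lichev framework directly to a $tk$-uniform host whose blocks are the candidate edges cannot reach the constant $\frac{1}{k!}\cdot\frac{1}{\binom{tk-1}{k-1}}$. In that formulation two blocks of a matching have edge-disjoint $k$-shadows, so every edge you keep consumes $\binom{tk}{k}$ of the $\binom{n}{k}$ available $k$-sets, and the density of \emph{any} such packing is capped at $\binom{n}{k}/\binom{tk}{k}$ --- short of the target by exactly a factor of $t$, since $\binom{tk}{k}=t\binom{tk-1}{k-1}$. To beat this cap the extremal hypergraph must contain many pairs of edges sharing $k$ vertices, which no packing of single $tk$-sets can produce. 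The paper's solution is a two-level construction (\cref{construction}): first build a fixed gadget $\mathcal{F}$ by taking a $(tk-1)$-graph $\mathcal{G}$ on $m_0$ vertices that is $\ell^-$-free for $2\le\ell\le 2t$ and has at least $cm_0^{k+1/(2t-1)}$ edges --- crucially, \emph{more edges than $k$-subsets of its vertex set} --- and appending a private tail vertex $u_i$ to each edge, as in \eqref{eq:canc-F}. Then $|\mathcal{F}|/|\mathcal{J}(\mathcal{F})|\ge(1-\epsilon/2)/\binom{tk-1}{k-1}$, because the shadow cost per edge is essentially only the $\binom{tk-1}{k-1}$ $k$-sets through its tail. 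The conflict-free matching theorem is then applied not to $tk$-sets but to copies of the shadow $\mathcal{J}(\mathcal{F})$ (\cref{lem:conflict-free-matching}), and a separate deterministic argument (\cref{lem:canc-LB}, via \cref{lem:H(F)}) shows that the union of the gadget copies is $2(t-1)$-cancellative. Without this gadget layer, your plan verifies the hypotheses of the black box but proves a bound that is too small by a factor of $t$.

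For the upper bound, the step ``bound the number of edges through each $S$ using cancellativity'' is not available: a sunflower whose core is a $k$-set is $2(t-1)$-cancellative (the symmetric difference of two petals is disjoint from all other edges), so $k$-sets can have unbounded degree. The paper instead writes $\binom{tk}{k}|\mathcal{F}|=\sum_{s\ge1}|\mathcal{D}(\mathcal{F},s)|+\sum_{s\ge2}(s-1)|\mathcal{D}(\mathcal{F},s)|$ and bounds only the \emph{excess} term, by a charging argument: if $T$ has degree $s\ge2$ with edges $F_1,\dots,F_s\supseteq T$, then cancellativity forces (this is where your ``extract $2t$ edges'' idea is made precise, in \cref{claim:uf-UB}) that for at least $s-1$ indices $i$ the family of degree-$\ge2$ $k$-subsets of $F_i\setminus T$ has matching number at most $t-2$; Kleitman--Frankl (\cref{lem:matching-conjecture}) then guarantees at least $\binom{(t-1)k-1}{k-1}$ \emph{degree-one} $k$-subsets inside each such $F_i\setminus T$, and double counting pairs consisting of a degree-$\ge2$ set and a degree-one set lying in a common edge yields $\sum_{s\ge2}(s-1)|\mathcal{D}(\mathcal{F},s)|\le(t-1)\binom{n}{k}$, hence $|\mathcal{F}|\le\binom{n}{k}/\binom{tk-1}{k-1}$. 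So your instinct that a matching must be avoided in a residual family is exactly right, but the counting scheme that turns it into the sharp constant is the charge against degree-one $k$-sets, not a degree bound at each $k$-set.
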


\subsection{Union-free $r$-graphs}

\noindent Motivated by applications in information retrieval and data communication, Kautz and Singleton \cite{kautz-singleton-superimposed-codes} introduced the notion of $t$-union-free hypergraphs in 1964\footnote{with the name ``uniquely decipherable codes of order $t$''}. Unaware of their work, Erd\H{o}s and Moser \cite{Erdos-Moser-2-union-free} reintroduced the 2-union-free hypergraphs in 1970.


Union-free hypergraphs are closely related to cover-free hypergraphs, which were introduced independently by Kautz and Singleton \cite{kautz-singleton-superimposed-codes} and Erd\H{o}s, Frankl, and F\"uredi \cite{Erdos-Frankl-Furedi-2-cover-free,Erdos-Frankl-Furedi-r-cover-free}. An $r$-graph $\ma{H}$ is said to be {\it $t$-cover-free} if for arbitrary $t+1$ distinct edges $A_1,\ldots,A_t,B\in\ma{H}$, it holds that $B\nsubseteq \cup_{i=1}^t A_i.$


The following fact is well-known (see, e.g. \cite{Furedi-Ruszinko-no-grid}).
\paragraph{Fact.} If $\ma{H}$ is $t$-cover-free then it is $t$-union-free; if $\ma{H}$ is $t$-union-free then it is $(t-1)$-cover-free.

Let $F_t(n,r)$ denote the maximum number of edges that can be contained in an $n$-vertex $t$-cover-free $r$-graph. By the above fact, we have
\begin{align}\label{eq:union-free-and-cover-free}
    F_t(n,r)\le U_t(n,r)\le F_{t-1}(n,r).
\end{align}

\noindent Frankl and F\"uredi \cite{Frankl-Furedi-colored-packing} showed that there is a constant $\gamma(r,t)$ depending only on $r,t$ such that
\begin{equation}\label{eq:cff-frankl-furedi}
    \lim_{n\to \infty}\frac{F_{t}(n,r)}{n^{\lc\fr{r}{t}\rc}}=\gamma(r,t).
\end{equation}

\noindent Note that the $\gamma(r,t)$ in \eqref{eq:cff-frankl-furedi} is defined by the extreme of the well-known Erd\H{o}s Matching conjecture \cite{erdos1965problem}. In particular, it is known that (see e.g. the discussion in \cite{Frankl-Furedi-colored-packing}) 
\begin{align}\label{eq:cff-frankl-furedi-2}
    \lim_{n\to \infty}\frac{F_{t}(n,tk)}{n^k}=\frac{1}{k!}\cdot \frac{1}{\binom{tk-1}{k-1}}.
\end{align}

\noindent Observe that \eqref{eq:union-free-and-cover-free} and \eqref{eq:cff-frankl-furedi} together yield that
\begin{equation}\label{eq:cff-bound}
    \Omega(n^{\lc\fr{r}{t}\rc})=F_t(n,r)\le U_t(n,r)\le F_{t-1}(n,r)=O(n^{\lc\fr{r}{t-1}\rc}).
\end{equation}

There have been several improvements on \eqref{eq:cff-bound} in the past forty years. Frankl and F\"uredi \cite{Frankl-Furedi-2-union-free} showed that $U_2(n,r)=\Theta(n^{\lc 4r/3\rc/2})$. F\"uredi and Ruszink\'{o} \cite{Furedi-Ruszinko-no-grid} showed that $n^{2-o(1)}<U_r(n,r)=O(n^2)$. Recently, Shangguan and Tamo \cite{Shangguan-Tamo-canc-and-union-free} showed that for all $t\ge 3$ and $r\ge 3$,
\begin{equation}\label{eq:Shangguan-Tamo-union-free}
    U_t(n,r)=\Omega(n^{\fr{r}{t-1}}).
\end{equation}

\noindent Then, \eqref{eq:union-free-and-cover-free}, \eqref{eq:cff-frankl-furedi-2}, and \eqref{eq:Shangguan-Tamo-union-free} yield that
\begin{align}\label{eq:ST-union-free}
    \Omega(n^k)=U_{t+1}(n,tk)\le F_t(n,tk)\le\frac{1}{k!}\cdot \frac{1}{\binom{tk-1}{k-1}}\cdot n^k+o(n^k).
\end{align}

We present a new lower bound for $U_{t+1}(n,tk)$, showing that the upper bound in \eqref{eq:ST-union-free} is almost tight.

\begin{theorem}\label{thm:main-union-free}
    For all $t\ge 2$ and $k\ge 2$,
    \begin{align}\label{eq:union-free}
        \lim_{n\to \infty}\frac{U_{t+1}(n,tk)}{n^k}=\frac{1}{k!}\cdot \frac{1}{\binom{tk-1}{k-1}}.
    \end{align}
\end{theorem}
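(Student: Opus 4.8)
By the chain of inequalities in \eqref{eq:ST-union-free}, the upper bound $U_{t+1}(n,tk)\le \frac{1}{k!}\cdot\frac{1}{\binom{tk-1}{k-1}}n^k+o(n^k)$ is already available, so the entire task is to produce a matching lower bound: an $n$-vertex $tk$-uniform $(t+1)$-union-free hypergraph $\mathcal{H}$ with $(1-o(1))\frac{1}{k!}\cdot\frac{1}{\binom{tk-1}{k-1}}n^k$ edges. The first step is to reformulate the forbidden structure. If $\mathcal{A}\neq\mathcal{B}$ witness a failure of $(t+1)$-union-freeness, then after deleting common edges we may assume $\mathcal{A}\cap\mathcal{B}=\emptyset$; since all edges have the same size $tk$ and are distinct, a short argument shows that both families then have at least two and at most $t+1$ edges and that their common union $U$ has size strictly larger than $tk$. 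In particular every violation is a \emph{bounded configuration}: it consists of at most $2(t+1)$ edges spanning $O(1)$ vertices. This is exactly the regime in which the conflict-free matching machinery applies.

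The plan is to realize $\mathcal{H}$ as a near-perfect matching of a carefully chosen host hypergraph $\mathcal{G}$ subject to avoiding a bounded family of conflicts. A natural choice of host that produces the correct leading constant is the following ``rooted packing''. Let the vertex set of $\mathcal{G}$ be $\binom{[n]}{k}$, and for every $tk$-set $E$ and every $v\in E$ introduce the edge $e_{E,v}=\{S\in\binom{[n]}{k}:v\in S\subseteq E\}$, which has exactly $\binom{tk-1}{k-1}$ vertices. One checks that $\mathcal{G}$ is nearly regular with degree $\sim k\binom{n-k}{tk-k}$ and has codegrees of lower order, and that in any matching a fixed $tk$-set can be used with at most one root; hence a matching of $\mathcal{G}$ is a genuine simple $tk$-uniform family, and a near-perfect matching has $(1-o(1))\binom{n}{k}/\binom{tk-1}{k-1}=(1-o(1))\frac{1}{k!}\cdot\frac{1}{\binom{tk-1}{k-1}}n^k$ edges, matching the target. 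The constant is forced because each selected edge privately consumes $\binom{tk-1}{k-1}$ of the $\approx\binom{n}{k}$ available $k$-subsets. I would then define the conflict system $\mathcal{C}$ to consist of all the bounded union-free violations above (expressed in terms of the underlying $tk$-sets, independently of the chosen roots) and invoke the theorem of Delcourt and Postle and of Glock, Joos, Kim, K\"uhn, and Lichev to extract a near-perfect matching of $\mathcal{G}$ meeting no conflict. The resulting family is $(t+1)$-union-free and of the required size.

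The heart of the argument is verifying that $\mathcal{C}$ satisfies the boundedness and spreadness hypotheses required by the framework, and this is where I expect the main difficulty to lie. Concretely, one must show that for each of the $O(1)$ conflict types, and for each edge $e_{E,v}$, the number of violations through $E$ is smaller than the degree of $\mathcal{G}$ by a factor $n^{-\Omega(1)}$; this is what guarantees that forbidding all violations costs only $o(n^k)$ edges. Fixing one edge pins down much of the common union $U$, so each additional edge of a violation can be completed in only $O(n^{k-1})$ ways, which should beat the required threshold. However, the configurations in which the participating $tk$-sets overlap heavily, so that very few vertices are free, are the densest conflicts and must be treated by separate, careful counting to confirm the spreadness bound in every case. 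Once spreadness is established uniformly over the finitely many conflict types, the framework yields the desired construction, and together with the upper bound from \eqref{eq:ST-union-free} this proves \eqref{eq:union-free}.
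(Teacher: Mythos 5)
Your plan is a genuinely different, one-step route: you apply the conflict-free matching framework directly to a rooted-star host (vertex set $\binom{[n]}{k}$, edges $e_{E,v}$ of size $\binom{tk-1}{k-1}$) with \emph{all} union-free violations declared as conflicts, whereas the paper's proof is two-level (pack shadows of a fixed gadget $\mathcal{F}$, with only $\ell^-$-configurations as conflicts, and kill the remaining violations structurally). Your host itself is fine: it is $d$-regular with $d=k\binom{n-k}{(t-1)k}=\Theta\bigl(n^{(t-1)k}\bigr)$, has codegrees $O\bigl(n^{(t-1)k-1}\bigr)$, and a near-perfect matching has the right size. The gap is precisely in the step you deferred: the conflict system provably violates hypothesis (C1) of \cref{lem:usefule-lem}, and it does so for the conflicts you would least suspect --- not the heavily overlapping configurations, but the completely spread-out ``cover'' violations. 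Take $\mathcal{A}=\{C_1,\dots,C_t\}$ and $\mathcal{B}=\{C_1,\dots,C_t,A\}$ with $A\subseteq C_1\cup\cdots\cup C_t$ (incidentally, this example also refutes your opening reduction to $\mathcal{A}\cap\mathcal{B}=\emptyset$: removing the common edges destroys the equality of the unions). This is a conflict of size $t+1$, and fixing one rooted star $e_{C_1,v_1}$ pins down essentially nothing: $C_2,\dots,C_t$ remain completely free, and only $A$ is confined to the union. Hence the number of such conflicts through a fixed edge is $\Theta\bigl(n^{tk(t-1)}\bigr)$, which has exactly the same exponent as the allowance $e\cdot d^{t}=\Theta\bigl(n^{t(t-1)k}\bigr)$ in (C1), but exceeds it by a multiplicative constant that is exponential in $k$: for $t=2$ the ratio of this count to $d^{2}$ is about $4\binom{4k}{2k}\big/\binom{2k}{k}=\Theta(4^{k}/\sqrt{k})$, already about $47>e=2t+2$ at $k=2$. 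So (C1) fails for all $t\ge2$, $k\ge2$, and your assertion that ``each additional edge of a violation can be completed in only $O(n^{k-1})$ ways'' is false for this family.

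Moreover, this is not a defect of the particular theorem you quote but of the whole strategy: these tight cover violations cannot be treated as rare conflicts at all. In a matching of your host of the target size $M=(1-o(1))\binom{n}{k}\big/\binom{tk-1}{k-1}$, a first-moment count shows that the expected number of violations with $C_1,\dots,C_t$ pairwise disjoint and $|A\cap C_i|=k$ in a ``random'' such matching exceeds $M$ by a factor tending to $\frac{t^{t}}{t!}\cdot\frac{(tk)!}{(k!)^{t}}$ (already $12$ for $t=k=2$); typical near-optimal matchings therefore contain far more violations than edges, so no deletion or nibble-type argument treating them as sparse conflicts can succeed --- the conflict-free objects that do exist are highly atypical. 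This is exactly what the paper's architecture circumvents: \cref{lem:conflict-free-matching} is invoked only against $\ell^-$-configurations, whose degree counts sit below the (C1)/(C2) thresholds by a factor $n^{-1}$ (see \eqref{eq:c1-c4}), while the tight cover/union violations are excluded structurally --- by \cref{claim:induced} (the induced property of the packing) any such violation in $\mathcal{H}(\mathcal{F})$ is forced inside a single copy of the gadget $\mathcal{F}$, which is chosen $(t+1)$-union-free in advance (\cref{lem:uf-LB}, \cref{lem:uf-LB-F}). To salvage your approach you would need to add an analogous structural layer; as written, the verification at the heart of your proposal cannot be completed.
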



\paragraph{Overview of the proof of the lower bounds.} Note that the limits in Theorems \ref{thm:main-canc} and \ref{thm:main-union-free} magically coincide. Indeed, we will prove the two lower bounds $C_{2(t-1)}(n,tk)\ge(1-o(1))\cdot \binom{n}{k} / \binom{tk-1}{k-1}$ and $U_{t+1}(n,tk)\ge (1-o(1)) \cdot\binom{n}{k} / \binom{tk-1}{k-1}$, where $o(1)\rightarrow 0$ as $n\rightarrow\infty$, in a quite unified way. We will construct the desired cancellative (resp. union-free) hypergraph based on a framework that is widely used in proving the existence or computing the precise values of degenerate Tur\'an densities, see \cite{Erdos-Frankl-Furedi-r-cover-free,Frankl-Furedi-colored-packing,Glock64,Glock-Kim-Lichev-Pikhurko-Sun-bes,Letzter-Sgueglia-23,Shangguan-Tamo-degenerate} for a few examples.

Let $\mathcal{F}$ be a fixed $tk$-graph. The {\it $k$-shadow} of $\mathcal{F}$, denoted as $\mathcal{J}(\mathcal{F})$, consists of all $k$-subsets of $V(\mathcal{F})$ that are contained in at least one member of $\mathcal{F}$; in other words, $\mathcal{J}(\mathcal{F})=\cup_{F\in\mathcal{F}}\binom{F}{k}$.

\begin{construction}\label{construction}
Let $\mathcal{F}$ be a fixed $tk$-graph and $\mathcal{J}(\mathcal{F})$ be its $k$-shadow. Let $\mathcal{P}=\{(V(\mathcal{J}_i),\mathcal{J}_i):i\in[|\mathcal{P}|]\}$ be a {\it $\mathcal{J}(\mathcal{F})$-packing} in $\binom{[n]}{k}$, where the $\mathcal{J}_i$'s are pairwise edge-disjoint copies of $\mathcal{J}(\mathcal{F})$ with vertex set $V(\mathcal{J}_i)$. Put a copy $\mathcal{F}_i$ of $\mathcal{F}$ on top of each $\mathcal{J}_i$, and let
    \begin{align}\label{eq:H-sparse1}
        \mathcal{H}(\mathcal{F})=\bigcup_{i=1}^{|\mathcal{P}|} \mathcal{F}_i.
    \end{align}
\noindent Then for each $i\in[|\mathcal{P}|]$, we have $\mathcal{J}(\mathcal{F}_i)=\mathcal{J}_i$ and $V(\mathcal{F}_i)=V(\mathcal{J}_i)$; moreover, $\mathcal{H}(\mathcal{F})\subseteq\binom{[n]}{tk}$ is a $tk$-graph formed by the union of copies of $\mathcal{F}$.
\end{construction}

Note that we can assume that the $\mathcal{J}(\mathcal{F})$-packing $\mathcal{P}$ is near-optimal in the sense that $|\mathcal{P}|\ge(1-o(1))\cdot\binom{n}{k}/|\mathcal{J}(\mathcal{F})|$, where $o(1)\rightarrow 0$ as $n\rightarrow\infty$. To prove the desired lower bounds, we will show that if $\mathcal{F}$ and $\mathcal{P}$ satisfy some nice properties, then the $tk$-graph $\mathcal{H}(\mathcal{F})$ in \cref{construction} must be $2(t-1)$-cancellative (resp. $(t+1)$-union-free). Crucially, we also require that $|\mathcal{F}|/|\mathcal{J}(\mathcal{F})|\ge (1-\delta)\cdot 1/\binom{tk-1}{k-1}$, where $\delta$ can be made arbitrarily small as $|V(\mathcal{F})|$ is sufficiently large. So we can conclude by \eqref{eq:H-sparse1} that
\begin{align*}
    |\mathcal{H}(\mathcal{F})|=|\mathcal{P}|\cdot|\mathcal{F}|\ge(1-o(1))\cdot\binom{n}{k}|\mathcal{F}|/|\mathcal{J}(\mathcal{F})|\ge(1-o(1))\cdot \binom{n}{k} \bigg/ \binom{tk-1}{k-1}.
\end{align*}

\noindent The main difficulty is to set up the properties of $\mathcal{F}$ and $\mathcal{P}$ accordingly so that $\mathcal{H}(\mathcal{F})$ is $2(t-1)$-cancellative (resp. $(t+1)$-union-free) and $|\mathcal{F}|/|\mathcal{J}(\mathcal{F})|$ satisfies the prescribed inequality. Fortunately, we find a way to overcome this difficulty. The details can be found in Lemmas \ref{lem:canc-LB} and \ref{lem:uf-LB} below.

\paragraph{Organization.} The rest of this paper is organised as follows.
We will present some definitions and some auxiliary lemmas in \cref{sec:definitions-lemmas}, and prove these lemmas in \cref{sec:proof-lemmas}. We will prove our main results, Theorems \ref{thm:main-canc} and \ref{thm:main-union-free}, in Sections \ref{sec:thm:main-canc} and \ref{sec:thm:main-union-free}, respectively. Lastly, we will conclude this paper and mention some open questions in \cref{sec:conclusion}.

\section{Definitions and lemmas}\label{sec:definitions-lemmas}

\noindent Throughout, we assume that $r,t,k,v,e$ are fixed positive integers and $n\rightarrow\infty$. We use $[n]$ to denote the set $\{1,\ldots,n\}$. For a finite set $V$, we use $\binom{V}{r}$ to denote the family of all $r$-subsets of $V$. An $r$-graph $\mathcal{H}$ on the vertex set $V(\mathcal{H})$ is a family of distinct $r$-subsets of $V(\mathcal{H})$, that is, $\mathcal{H}\subseteq\binom{V(\mathcal{H})}{r}$. Assume without loss of generality that every vertex in $V(\mathcal{H})$ is contained in at least one edge in $\mathcal{H}$.

We denote the {\it symmetric difference} of two finite subsets $B,C$ by $B\Delta C:=(B\setminus C)\cup(C\setminus B)$. With this notation, a hypergraph $\mathcal{H}$ is $t$-cancellative, if for arbitrary $t+2$ distinct edges $A_1,\ldots,A_t,B,C\in\mathcal{H}$, it holds that $B\Delta C\ns \cup_{i=1}^t A_i$.


An {\it $(v,e)$-configuration} is a family of $e$ distinct edges whose union contains at most $v$ vertices. We say an $r$-graph $\mathcal{H}$ is {\it $(v,e)$-free} if it contains no $(v,e)$-configuration, i.e., for arbitrary $e$ distinct edges $A_1,\ldots,A_e\in\mathcal{H}$, it holds that $|\cup_{i=1}^e A_i|\ge v+1$. For convenience, if there is no confusion on $r$ and $k$, then we use an $e$-configuration (resp. an $e^-$-configuration) to denote an $(er-(e-1)k,e)$-configuration (resp. an $(er-(e-1)k-1,e)$-configuration). With this notation, we say that an $r$-graph is {\it $e$-free} (resp. {\it $e^-$-free}) if it is $(er-(e-1)k,e)$-free (resp. $(er-(e-1)k-1,e)$-free).

The following lemma, which provides a lower bound on the maximum number of edges that can be contained in an $n$-vertex $(v,e)$-free $r$-graph, can be easily proved by the probabilistic method.

\begin{lemma}[see, e.g. Proposition 6 from \cite{Shangguan-Tamo-spa-hyp-coding}]\label{lem:e^--free}
    For all fixed integers $r>k\ge 2$ and $e\ge 2$, there exists an $r$-graph $\mathcal{G}\subseteq\binom{[m]}{r}$ with at least $cm^{k+\frac{1}{e-1}}$ edges that is $\ell^-$-free for all $2\le\ell\le e$, where $c=c(r,k,e)$ is an absolute constant independent of $m$.
\end{lemma}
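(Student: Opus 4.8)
The plan is to establish this by the deletion (alteration) method. First I would form a random $r$-graph $\mathcal{G}_0\subseteq\binom{[m]}{r}$ by including each of the $\binom{m}{r}$ potential edges independently with probability $p:=\beta m^{-\alpha}$, where $\alpha:=r-k-\frac{1}{e-1}$ and $\beta>0$ is a small constant to be fixed at the very end. Then $\mathbb{E}|\mathcal{G}_0|=p\binom{m}{r}=(1+o(1))\frac{\beta}{r!}\,m^{k+\frac{1}{e-1}}$, which already has the target order of magnitude; the remaining task is to destroy every forbidden configuration while retaining a constant fraction of the edges.

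Next I would bound, for each fixed $\ell$ with $2\le\ell\le e$, the expected number $X_\ell$ of $\ell^-$-configurations present in $\mathcal{G}_0$. The total number of $\ell^-$-configurations available inside $\binom{[m]}{r}$ is $O(m^{\ell r-(\ell-1)k-1})$: the union of such a configuration spans at most $v_\ell:=\ell r-(\ell-1)k-1$ vertices, which can be chosen in at most $\binom{m}{v_\ell}=O(m^{v_\ell})$ ways, and on a fixed bounded vertex set the number of ways to pick $\ell$ edges realising the configuration is a constant depending only on $r,k,\ell$ (at most $\binom{\binom{v_\ell}{r}}{\ell}$). Since each fixed configuration survives in $\mathcal{G}_0$ with probability $p^\ell$, linearity of expectation gives $\mathbb{E}X_\ell=O\!\big(p^\ell m^{\ell r-(\ell-1)k-1}\big)$. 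Substituting $p=\beta m^{-\alpha}$ and simplifying the exponent, namely $\ell r-(\ell-1)k-1-\ell\alpha=k-1+\frac{\ell}{e-1}$, yields $\mathbb{E}X_\ell=O\!\big(m^{\,k-1+\frac{\ell}{e-1}}\big)$.

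Comparing this with the exponent $k+\frac{1}{e-1}$ of $\mathbb{E}|\mathcal{G}_0|$, the gap is $\frac{e-\ell}{e-1}$, so every $\ell<e$ contributes only a lower-order number of copies and is harmless. The binding case is $\ell=e$, where $\mathbb{E}X_e=(1+o(1))\,C\beta^e m^{k+\frac{1}{e-1}}$ has \emph{exactly} the same order as $\mathbb{E}|\mathcal{G}_0|$, with $C=C(r,k,e)$ absolute; this balance is the crux of the argument. To finish, I would delete one edge from each $\ell^-$-configuration over all $2\le\ell\le e$, obtaining an $r$-graph $\mathcal{G}$ that is $\ell^-$-free for every such $\ell$ and satisfies $|\mathcal{G}|\ge|\mathcal{G}_0|-\sum_{\ell=2}^{e}X_\ell$. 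Taking expectations and choosing $\beta$ small enough that $\frac{\beta}{r!}>C\beta^e$, i.e.\ $\beta^{e-1}<\frac{1}{r!\,C}$, forces $\mathbb{E}\big(|\mathcal{G}_0|-\sum_{\ell}X_\ell\big)\ge c\,m^{k+\frac{1}{e-1}}$ for some $c=c(r,k,e)>0$, so some outcome attains this bound, which gives the desired $\mathcal{G}$. The only genuinely delicate point is this constant-factor balancing in the critical case $\ell=e$; everything else is routine counting and linearity of expectation.
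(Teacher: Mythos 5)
Your proposal is correct and is essentially the argument the paper has in mind: the lemma is stated with the remark that it ``can be easily proved by the probabilistic method'' (citing Proposition 6 of \cite{Shangguan-Tamo-spa-hyp-coding}), and that proof is exactly this alteration argument --- random $r$-graph with edge probability $p=\Theta(m^{-(r-k-\frac{1}{e-1})})$, expectation count of $\ell^-$-configurations, and deletion of one edge per configuration, with the critical balancing at $\ell=e$ handled by choosing the constant $\beta$ small. The only cosmetic imprecision is writing $\mathbb{E}X_e=(1+o(1))C\beta^e m^{k+\frac{1}{e-1}}$ as an asymptotic equality where only the upper bound $\mathbb{E}X_e\le C\beta^e m^{k+\frac{1}{e-1}}$ is needed (and is what the counting actually gives), which does not affect the argument.
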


Next, we present the formal definitions of packings and induced packings.

\begin{definition}[Packings and induced packings]\label{def:packing}
    For a fixed $k$-graph $\mathcal{J}$, a {\it $\mathcal{J}$-packing} $\mathcal{P}$ in $\binom{[n]}{k}$ is a family of {\it edge-disjoint} copies of $\mathcal{J}$, that is, $\mathcal{P}=\{(V(\mathcal{J}_i),\mathcal{J}_i):i\in [|\mathcal{P}|]\}$, where for each $i$, $\mathcal{J}_i\subseteq\binom{[n]}{k}$ is a copy of $\mathcal{J}$, and for distinct $i,i'$, $\mathcal{J}_i\cap\mathcal{J}_{i'}=\emptyset$.

    The $\mathcal{J}$-packing $\mathcal{P}$ is further said to be {\it induced} if for all distinct $i,i'$, $|V(\mathcal{J}_i)\cap V(\mathcal{J}_{i'})|\le k$, and if $|V(\mathcal{J}_i)\cap V(\mathcal{J}_{i'})|=k$ then $V(\mathcal{J}_i)\cap V(\mathcal{J}_{i'})$ is neither an edge in $\mathcal{J}_i$ nor an edge in $\mathcal{J}_{i'}$.
\end{definition}

Since the $k$-graphs in a $\mathcal{J}$-packing are pairwise edge-disjoint, every $\mathcal{J}$-packing in $\binom{[n]}{k}$ can have at most $\frac{\binom{n}{k}}{|\mathcal{J}|}$ copies of $\mathcal{J}$. The influential works of R\"odl \cite{Rodl-nibble}, Frankl and R\"odl \cite{Frankl-Rodl-matching}, and Pippenger (see \cite{Pippenger-Spencer-asymptotic}) showed that the upper bound is asymptotically tight in the sense that there exists a near-optimal $\mathcal{J}$-packing that contains at least $(1-o(1))\cdot\frac{\binom{n}{k}}{|\mathcal{J}|}$ edge-disjoint copies of $\mathcal{J}$. Frankl and F\"uredi \cite{Frankl-Furedi-colored-packing} strengthened their results by showing that there exists a near-optimal induced $\mathcal{J}$-packing. Delcourt and Postle \cite{Delcourtconflict} and independently Glock, Joos, Kim, K\"uhn, and Lichev \cite{Glockconflict} greatly extended the above line of work by establishing a general framework that can be used to show the existence of near-optimal $\mathcal{J}$-packings avoiding certain small configurations.

In order to construct cancellative and union-free hypergraphs, we will need $\mathcal{J}$-packings avoiding certain $(v,e)$-configurations. We apply the tools developed in \cite{Delcourtconflict,Glockconflict} to prove the following lemma, which shows that for every fixed $k$-graph $\mathcal{J}$, there exists a near-optimal induced  $\mathcal{J}$-packing in $\binom{[n]}{k}$ that does not contain $\ell^-$-configurations for every fixed small $\ell$.

\begin{lemma}\label{lem:conflict-free-matching}
    Let $m\ge k,~e\ge 2$ be fixed integers and $\mathcal{J}$ be a $k$-graph on $m$ vertices. Then there exists a $\mathcal{J}$-packing $\mathcal{P}=\{(V(\mathcal{J}_i),\mathcal{J}_i):i\in[|\mathcal{P}|]\}$ in $\binom{[n]}{k}$ such that the following conditions hold:
    \begin{itemize}
        \item [(i)] $\mathcal{P}$ is induced;
        \item [(ii)] $\mathcal{V}=\{V(\mathcal{J}_i):i\in[|\mathcal{P}|]\}$ is $\ell^-$-free (i.e., $(\ell\cdot m-(\ell-1)k-1,\ell)$-free) for all $2\le\ell\le e$;
        \item [(iii)] $|\mathcal{P}|\ge(1-o(1))\binom{n}{k}/|\mathcal{J}|$, where $o(1)\rightarrow 0$ as $n\rightarrow\infty$.
    \end{itemize}
\end{lemma}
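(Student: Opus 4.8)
The plan is to encode $\mathcal{J}$-packings as matchings in an auxiliary hypergraph and then invoke the conflict-free matching machinery of Delcourt and Postle \cite{Delcourtconflict} and of Glock, Joos, Kim, K\"uhn, and Lichev \cite{Glockconflict}. First I would form the auxiliary $|\mathcal{J}|$-uniform hypergraph $\mathcal{A}$ whose vertex set is $\binom{[n]}{k}$ and whose edges are the copies of $\mathcal{J}$ inside the complete $k$-graph $\binom{[n]}{k}$, each copy being regarded as the $|\mathcal{J}|$-set of $k$-subsets that constitute its edges. A matching in $\mathcal{A}$ is precisely a family of pairwise edge-disjoint copies of $\mathcal{J}$, i.e.\ a $\mathcal{J}$-packing, and a near-perfect matching (one covering a $(1-o(1))$-fraction of the vertices) has $(1-o(1))\binom{n}{k}/|\mathcal{J}|$ edges, which yields (iii). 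Since $S_n$ acts transitively on the $k$-subsets of $[n]$, every $k$-set lies in the same number of copies of $\mathcal{J}$, so $\mathcal{A}$ is exactly $d$-regular with $d=\Theta(n^{m-k})$; moreover two distinct $k$-sets sharing $a\le k-1$ vertices lie together in $O(n^{m-2k+a})=O(n^{m-k-1})$ common copies, so the maximal codegree of $\mathcal{A}$ is $O(d^{1-\epsilon})$ with $\epsilon=1/(m-k)$ (assuming $m>k$; the case $m=k$ is trivial). These are exactly the regularity and codegree hypotheses required by the black-box theorem.

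Next I would set up a conflict system $\mathcal{C}$ on $E(\mathcal{A})$ so that a conflict-free matching automatically satisfies (i) and (ii). For (i) I place in $\mathcal{C}$ all pairs $\{\mathcal{J}_1,\mathcal{J}_2\}$ of copies with $|V(\mathcal{J}_1)\cap V(\mathcal{J}_2)|\ge k+1$, together with all pairs with $|V(\mathcal{J}_1)\cap V(\mathcal{J}_2)|=k$ whose common $k$-set is an edge of $\mathcal{J}_1$ or of $\mathcal{J}_2$; avoiding these is exactly inducedness. For (ii) I place in $\mathcal{C}$, for each $3\le \ell\le e$, every $\ell$-set of copies whose vertex sets form an $\ell^-$-configuration, that is, with $|\bigcup_{i}V(\mathcal{J}_i)|\le \ell m-(\ell-1)k-1$. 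Crucially, I would retain only the inclusion-minimal such configurations (those containing no smaller member of $\mathcal{C}$); this loses nothing, since a matching avoiding all size-$2$ conflicts and all minimal $\ell$-conflicts avoids \emph{every} $\ell^-$-configuration, and it is precisely what makes the codegree estimates go through.

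Then I would verify the boundedness conditions. The number of $\ell^-$-configurations through a fixed copy is controlled by how many vertices the other $\ell-1$ copies place outside it: this total is at most $(\ell-1)(m-k)-1$, so a routine count gives $\Theta(n^{(\ell-1)(m-k)-1})=\Theta(d^{\ell-1}/n)$ such configurations through an edge, comfortably below $d^{\ell-1}$. The size-$2$ inducedness conflicts through a fixed copy number $\Theta(d)$, again at the allowed scale $d^{2-1}$. For the codegree condition I would fix two copies $\mathcal{J}_1,\mathcal{J}_2$ lying in a common retained $\ell$-conflict; minimality forces $|V(\mathcal{J}_1)\cap V(\mathcal{J}_2)|\le k$, and the same new-vertex count then yields $O(d^{\ell-2}/n)=O(d^{\ell-2-\epsilon})$ extensions, so the required polynomial savings hold. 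With all conditions checked, the conflict-free matching theorem produces a near-perfect matching of $\mathcal{A}$ avoiding $\mathcal{C}$, and reading it back as a $\mathcal{J}$-packing delivers (i), (ii), and (iii) simultaneously.

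The main obstacle I anticipate is the codegree condition rather than the plain degree condition. The naive $\ell^-$-conflict system fails it: two copies on the same $m$ vertices (or with very large overlap) lie in far more than $d^{\ell-2-\epsilon}$ common $\ell^-$-configurations, violating the hypothesis. The fix is the minimality restriction above, which guarantees that any two edges appearing together in a retained conflict overlap in at most $k$ vertices, so that for an arbitrary pair the codegree is either $0$ (when the pair is itself forbidden) or carries the $n^{-1}=d^{-\epsilon}$ savings. Checking carefully that this restriction discards no needed conflict, and that the resulting degrees and codegrees meet the precise numerical thresholds of the cited theorem, is the technical heart of the argument.
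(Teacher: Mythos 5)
Your overall architecture --- an auxiliary $|\mathcal{J}|$-uniform hypergraph whose vertices are $k$-sets and whose edges are copies of $\mathcal{J}$, a conflict system built from inclusion-minimal $\ell^-$-configurations, and then the black box of \cite{Delcourtconflict,Glockconflict} --- coincides with the paper's proof, and your treatment of conditions (ii) and (iii), including the minimality trick and the counting that yields the $d^{-\epsilon}$ savings, is essentially the paper's argument. The genuine gap is in how you handle inducedness. You encode it as extra size-$2$ conflicts: pairs of copies whose vertex sets meet in exactly $k$ vertices forming an edge of one of them. This conflict system violates the hypotheses of \cref{lem:usefule-lem}, and irreparably so. Fix a copy $\mathcal{J}_2$ and a $k$-subset $T\subseteq V(\mathcal{J}_2)$ with $T\notin\mathcal{J}_2$; then \emph{every} copy $\mathcal{J}_1$ containing $T$ as an edge with $V(\mathcal{J}_1)\cap V(\mathcal{J}_2)=T$ genuinely violates inducedness with $\mathcal{J}_2$, so any correct encoding must declare all these pairs conflicts --- and there are $(1-o(1))d$ of them. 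This breaks condition (C3), which caps exactly this count at $d^{1-\epsilon}$. It also breaks (C1): the number of your inducedness pairs through a fixed copy is $\Theta\bigl(\binom{m}{k}d\bigr)$ (choose the shared $k$-set inside the fixed copy, then the $m-k$ new vertices and a placement), while (C1) allows only $ed$, and in the intended application $m$ is enormous compared to the fixed $e$. Your minimality fix is irrelevant here: it concerns the conflicts of size $\ell\ge 3$, not these pairs, and since the pairs above are forced into any correct conflict system, no refinement of $\mathcal{C}$ can restore (C3). Inducedness simply cannot be delegated to the conflict framework when the host hypergraph contains all copies of $\mathcal{J}$.

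This is exactly the obstacle the paper's proof is designed to circumvent, and it is why the paper does not use your exactly regular hypergraph. The paper randomly colors each $k$-set red with a small fixed probability $\epsilon$ and blue otherwise, and admits a copy $\mathcal{J}'$ as an edge of the auxiliary hypergraph $\mathcal{H}$ only if all edges of $\mathcal{J}'$ are blue and all $k$-subsets of $V(\mathcal{J}')$ outside $\mathcal{J}'$ are red. Then any two edge-disjoint surviving copies are automatically an induced pair: a shared $k$-set of vertices that is an edge of one copy would be blue, while being a non-edge of the other forces it to be red. So inducedness holds for every matching of $\mathcal{H}$ with no pair conflicts at all, and the conflict system only needs the minimal $\ell^-$-configurations (whose degrees and codegrees do satisfy (C1)--(C4), just as you computed). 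The price, which your exactly-regular setup was trying to avoid, is that $\mathcal{H}$ is now random and only approximately regular, so the paper must add a concentration argument (bounded differences, \cref{claim-concentration}) to verify the degree hypothesis of \cref{lem:usefule-lem}. To repair your proof you would need this coloring device, or some other modification of the host hypergraph, rather than any strengthening of the conflict system.
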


We will also use the following lemma, which states a useful property of the hypergraph $\mathcal{H}(\mathcal{F})$ defined in \cref{construction}.

\begin{lemma}\label{lem:H(F)}
    Let $\mathcal{F}$ be a $tk$-graph on $m$ vertices and $\mathcal{J}(\mathcal{F})$ be the $k$-shadow of $\mathcal{F}$. Let $\mathcal{P}=\{(V(\mathcal{J}_i),\mathcal{J}_i):i\in[|\mathcal{P}|]\}$ be a {\it $\mathcal{J}(\mathcal{F})$-packing} in $\binom{[n]}{k}$. Suppose that $\mathcal{F}$ and $\mathcal{P}$ satisfy the following properties:
    \begin{itemize}
        \item [{\rm (i)}] $\mathcal{F}$ is $\ell^-$-free (i.e., $(\ell\cdot tk-(\ell-1)k-1,\ell)$-free) for all $2\le\ell\le e$;
        \item [{\rm (ii)}] $\mathcal{V}=\{V(\mathcal{J}_i):i\in[|\mathcal{P}|]\}$ is $\ell^-$-free (i.e., $(\ell\cdot m-(\ell-1)k-1,\ell)$-free) for all $2\le\ell\le e$.
    \end{itemize}
    \noindent Then the $tk$-graph $\mathcal{H}(\mathcal{F})$ defined in \cref{construction} is $\ell^-$-free (i.e., $(\ell\cdot tk-(\ell-1)k-1,\ell)$-free) for all $2\le\ell\le e$.
\end{lemma}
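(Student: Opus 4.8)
The plan is to take arbitrary $\ell$ distinct edges $E_1,\dots,E_\ell\in\mathcal{H}(\mathcal{F})$ with $2\le \ell\le e$ and show that their union has at least $\ell\cdot tk-(\ell-1)k$ vertices, which is precisely the assertion that $\mathcal{H}(\mathcal{F})$ contains no $\ell^-$-configuration. First I would record the structural fact that the copies $\mathcal{F}_1,\dots,\mathcal{F}_{|\mathcal{P}|}$ are pairwise edge-disjoint as $tk$-graphs: if a single $tk$-set were an edge of both $\mathcal{F}_i$ and $\mathcal{F}_{i'}$, then all of its $k$-subsets would lie in $\mathcal{J}_i\cap\mathcal{J}_{i'}$, contradicting that $\mathcal{P}$ is a packing. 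Hence each $E_p$ lies in a unique copy, and I may group $E_1,\dots,E_\ell$ according to the copy containing them. Say they fall into $s$ distinct copies with vertex sets $V_1,\dots,V_s$ (each of size $m$), where copy $j$ receives $a_j\ge 1$ of the edges, so that $\sum_{j=1}^s a_j=\ell$, and in particular $s\le \ell\le e$ and each $a_j\le \ell\le e$. Writing $W_j\subseteq V_j$ for the union of the $a_j$ edges lying in copy $j$, we have $\bigcup_{p=1}^\ell E_p=\bigcup_{j=1}^s W_j=:U$.

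Next I would extract two quantitative inputs. Within a single copy, property (i) applied to the $a_j$ distinct edges (a copy of $\mathcal{F}$ is $a_j^-$-free when $2\le a_j\le e$, while the bound is trivial when $a_j=1$) gives $|W_j|\ge a_j\cdot tk-(a_j-1)k$; summing over $j$ yields $\sum_{j=1}^s|W_j|\ge \ell\cdot tk-(\ell-s)k$. Across copies, property (ii) says $\mathcal{V}$ is $s^-$-free, so $\bigl|\bigcup_{j=1}^s V_j\bigr|\ge s\cdot m-(s-1)k$, which, rewritten in terms of overcounting, reads $\sum_{j=1}^s|V_j|-\bigl|\bigcup_{j=1}^s V_j\bigr|\le (s-1)k$ (this also holds trivially when $s=1$).

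The key step, and the place where a naive argument fails, is to bound how much the $W_j$ overlap. Summing the pairwise bounds $|V_j\cap V_{j'}|\le k$ would overcount and produce a term growing like $sk$, which is too large to absorb; instead I would compare multiplicities directly. For a vertex $v$, let $d_W(v)$ and $d_V(v)$ denote the number of indices $j$ with $v\in W_j$ and with $v\in V_j$, respectively. Since $W_j\subseteq V_j$ we have $d_W(v)\le d_V(v)$, every $v\in U$ satisfies $d_V(v)\ge 1$, and $U\subseteq\bigcup_j V_j$; therefore
\[
\sum_{j=1}^s|W_j|-|U|=\sum_{v\in U}\bigl(d_W(v)-1\bigr)\le \sum_{v\in U}\bigl(d_V(v)-1\bigr)\le \sum_{v\in \bigcup_j V_j}\bigl(d_V(v)-1\bigr)=\sum_{j=1}^s|V_j|-\Bigl|\bigcup_{j=1}^s V_j\Bigr|\le (s-1)k.
\]
Combining this with the within-copy bound gives $|U|\ge \bigl(\ell\cdot tk-(\ell-s)k\bigr)-(s-1)k=\ell\cdot tk-(\ell-1)k$, so $|U|>\ell\cdot tk-(\ell-1)k-1$ and $\{E_1,\dots,E_\ell\}$ is not an $\ell^-$-configuration. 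As $\ell$ was arbitrary in $\{2,\dots,e\}$, this completes the argument. The heart of the proof is the global double-counting in the displayed inequality: it converts the $s^-$-freeness of the vertex sets in $\mathcal{V}$ into an upper bound of exactly $(s-1)k$ on the overlap of the $W_j$, which is then absorbed precisely by the $(\ell-s)k$ slack coming from property (i).
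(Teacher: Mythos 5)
Your proof is correct and follows essentially the same route as the paper's: group the $\ell$ edges by the copies of $\mathcal{F}$ containing them, bound each within-copy union by property (i), bound the cross-copy overlap by property (ii), and combine to get $\ell\cdot tk-(\ell-s)k-(s-1)k=\ell\cdot tk-(\ell-1)k$. Your vertex-multiplicity double-counting inequality is precisely an inline proof of the paper's \cref{fact:pri of xor} (monotonicity of $(X_1,\dots,X_s)\mapsto\sum_{i}|X_i|-\bigl|\bigcup_{i}X_i\bigr|$ under enlarging each $X_i$ to $V(\mathcal{F}_i)$), which the paper instead invokes as a stated fact.
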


The proofs of Lemmas \ref{lem:conflict-free-matching} and \ref{lem:H(F)} are postponed to \cref{sec:proof-lemmas} below.

\section{Proofs of the lemmas}\label{sec:proof-lemmas}

\subsection{Proof of \cref{lem:conflict-free-matching}}

\noindent We will prove \cref{lem:conflict-free-matching} by using a powerful result proved independently in \cite{Glockconflict} and \cite{Delcourtconflict}. For real numbers $a$ and $b\in(0,1)$, we write $a=1\pm b$ if $a\in[1-b,1+b]$. For a hypergraph $\mathcal{C}$ (not necessarily uniform) and an integer $\ell\ge1$, we use $\mathcal{C}^{(\ell)}$ to denote the subhypergraph of $\mathcal{C}$ formed by the $\ell$-edges in $\mathcal{C}$, i.e., $V(\mathcal{C}^{(\ell)})=V(\mathcal{C})$ and $\mathcal{C}^{(\ell)}=\{C\in\mathcal{C}:|C|=\ell\}$. For a subset $X\subseteq V(\mathcal{C})$, we use $\deg_{\mathcal{C}}(X)$ to denote the number of edges in $\mathcal{C}$ that contain $X$, that is, $\deg_{\mathcal{C}}(X)=|\{C\in \mathcal{C}:X\subseteq C\}|$. Moreover, let $\Delta_{\ell}(\mathcal{C})$ be the maximum of $\deg_{\mathcal{C}}(X)$ over all $X\in\binom{V(\mathcal{C})}{\ell}$. Lastly, a {\it matching} in $\mathcal{C}$ is a set of pairwise disjoint edges in $\mathcal{C}$.

\begin{lemma}[see Theorem 1.3 in \cite{Glockconflict}, see also Corollary 1.17 in \cite{Delcourtconflict}]\label{lem:usefule-lem}
    For all integers $r,e\ge2$, there exists $\ep_0>0$ such that for all $\ep\in(0,\ep_0)$, there exists $d_0$ such that the following holds for all $d\ge d_0$. Let $\mathcal{H}$ be an $r$-graph with $|V(\mathcal{H})|\le\exp(d^{\ep^3})$ such that every vertex is contained in $(1\pm d^{-\ep})d$ edges and $\Delta_2(\mathcal{H})\le d^{1-\ep}$.

    Let $\mathcal{C}$ be a hypergraph with $V(\mathcal{C})=\mathcal{H}$ such that every $C\in\mathcal{C}$ satisfies $2\le|C|\le e$, and the following conditions hold:
    \begin{itemize}
        \item [(C1)] $\Delta_1(\mathcal{C}^{(\ell)})\le ed^{\ell-1}$ for all $2\le \ell\le e$;
        \item [(C2)] $\Delta_{\ell'}(\mathcal{C}^{(\ell)})\le d^{\ell-\ell'-\epsilon}$ for all $2\le \ell'<\ell\le e$;
        \item [(C3)] $|\{C_1\in\mathcal{H}:\{C_1,C_2\}\in\mathcal{C} \mbox{ and } v\in C_1\}|\le d^{1-\ep}$ for all $C_2\in \mathcal{H}$ and $v\in V(\mathcal{H})$;
        \item [(C4)] $|\{C_1\in\mathcal{H}:\{C_1,C_2\},\{C_1,C_3\}\in\mathcal{C}\}|\le d^{1-\ep}$ for all disjoint $C_2,C_3\in \mathcal{H}$.
    \end{itemize}
    Then, there exists a $\mathcal{C}$-free matching $\mathcal{M}\subseteq\mathcal{H}$ which covers all but $d^{-\ep^3}|V(\mathcal{H})|$ vertices of $\mathcal{H}$.
\end{lemma}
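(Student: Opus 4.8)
The plan is to prove this by the semi-random (nibble) method, constructing the conflict-free matching $\mathcal{M}$ incrementally over many rounds rather than all at once. The idea is to iterate a single randomized step. Writing $\mathcal{H}_i$ for the hypergraph surviving after $i$ rounds (with $\mathcal{H}_0=\mathcal{H}$) and $d_i$ for its current typical degree, I would independently retain each surviving edge with a small probability $p=\gamma/d_i$ for a small constant $\gamma$. From the retained edges I extract a partial matching by discarding any edge that meets another retained edge, and, crucially, any edge whose addition to the committed part of $\mathcal{M}$ would complete some conflict $C\in\mathcal{C}$. The committed edges and the vertices they cover are deleted to form $\mathcal{H}_{i+1}$, and the step repeats until the degree falls below a threshold, after which a final greedy clean-up completes the matching. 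Since each round covers a constant fraction of the remaining vertices, $\Theta(\log d)$ rounds suffice to leave only a $d^{-\ep^3}$-fraction uncovered.

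To make this work I would track a family of quantities through the rounds and show each stays tightly concentrated on a predictable trajectory: the surviving degree $\deg_{\mathcal{H}_i}(v)$ of every vertex, which should shrink by a roughly constant factor so that $\mathcal{H}_i$ remains approximately $d_i$-regular; the number of uncovered vertices, which should decay geometrically; and, for each conflict $C\in\mathcal{C}$ and each proper sub-collection of its edges already committed to $\mathcal{M}$, the number of still-available completions. The hypotheses are calibrated precisely for this bookkeeping. The near-regularity together with the codegree bound $\Delta_2(\mathcal{H})\le d^{1-\ep}$ guarantees that the matching-extraction step discards only a $o(1)$-fraction of the retained edges, while (C1) and (C2) bound the expected number of conflicts of each size $\ell$ and their overlaps, ensuring that the expected number of edges discarded for conflict reasons is of lower order in $d_i$ and hence negligible compared with the number committed.

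The concentration itself I would obtain by exposing the random retentions one round at a time and applying Freedman's martingale inequality (or a bounded-differences argument), using $|V(\mathcal{H})|\le\exp(d^{\ep^3})$ to absorb the union bound over all tracked quantities. Conditions (C3) and (C4) enter exactly here: they bound how strongly a single committed edge, or a disjoint pair of committed edges, can influence the availability of other edges through the conflict system, which in turn bounds the one-step variation of the tracked conflict quantities and rules out a cascade in which committing one edge forces the removal of many others. Showing that suitably rescaled analogues of (C1)--(C4) are (approximately) inherited by the conflict system restricted to $\mathcal{H}_{i+1}$ is what allows the induction on rounds to close.

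The main obstacle, and the technical heart of the argument, is controlling the conflict structure as it evolves. One must show not merely that few conflicts are completed in a single round, but that the higher-order conflict codegrees governed by (C2) and (C4) remain suitably bounded after rescaling, so that all invariants survive simultaneously through every round. This demands delicate concentration for quantities whose one-step changes are themselves only controlled in expectation, and it is here that the full strength of the framework is needed. Consequently, while the nibble outline above is the right conceptual route, I expect that the cleanest and most reliable path is to invoke the general conflict-free matching theorem of \cite{Glockconflict} and \cite{Delcourtconflict} verbatim, since reproving it in full detail would essentially reconstruct those papers.
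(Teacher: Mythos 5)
The paper offers no proof of \cref{lem:usefule-lem} at all: it is imported verbatim as a black box from \cite{Glockconflict} (Theorem 1.3) and \cite{Delcourtconflict} (Corollary 1.17), and since your proposal ultimately concludes by invoking those results verbatim, you end up taking exactly the same route as the paper. Your preliminary nibble sketch is a reasonable high-level account of how the cited works actually argue (semi-random rounds, tracked invariants, martingale concentration with the $|V(\mathcal{H})|\le\exp(d^{\ep^3})$ bound absorbing union bounds, and (C1)--(C4) controlling conflict counts and cascade effects), but as you yourself acknowledge it is an outline rather than a proof, so in both your write-up and the paper it is the citation, not the sketch, that carries the statement.
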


Now, we will derive \cref{lem:conflict-free-matching} from \cref{lem:usefule-lem}.

\begin{proof}[Proof of \cref{lem:conflict-free-matching}]
     Consider the complete $k$-graph $\binom{[n]}{k}$. Color every edge of $\binom{[n]}{k}$ red with probability $\epsilon$ and blue with probability $1-\epsilon$, where $\epsilon\in(0,\frac{1}{2(m-k)})$ is a sufficiently small but fixed constant. All colorings are independent. Let $\mathcal{H}$ be a hypergraph whose vertices are the blue edges in $\binom{[n]}{k}$. A copy $\mathcal{J}'$ of $\mathcal{J}$ with $V(\mathcal{J}')\subseteq[n]$ forms an edge in $\mathcal{H}$ if and only if every $k$-subset in $\mathcal{J}'$ is colored blue and every $k$-subset in $\binom{V(\mathcal{J}')}{k}/\mathcal{J}'$ is colored red. Note that $\mathcal{H}$ is a $|\mathcal{J}|$-graph. Later, one can see that such a coloring process yields an ``induced'' $\mathcal{J}$-packing that proves \cref{lem:conflict-free-matching} (i).

     To establish \cref{lem:conflict-free-matching} (ii), we define $\mathcal{C}$ as the collection of copies of $\mathcal{J}$ in $\mathcal{H}$ whose vertex sets form an {\it minimal} bad configuration, that is, their vertex sets form an $\ell^-$-configuration for some $2\le \ell\le e$ but contain no $\ell'^-$-configuration for any $1\le \ell'< \ell$. More precisely,
     \begin{equation}\label{eq:def-C}
         \begin{aligned}
         \mathcal{C}:=\bigcup_{\ell=2}^{e}\Bigg\{\{\mathcal{J}_1,\ldots,\mathcal{J}_{\ell}\} \subseteq\mathcal{H}:&\left|\bigcup_{i=1}^{\ell}V(\mathcal{J}_i)\right|\le\ell m-(\ell-1)k-1,\\
         &\left|\bigcup_{i\in S}V(\mathcal{J}_i)\right|\ge|S|m-(|S|-1)k \mbox{ for all }S\subsetneq [\ell]\Bigg\}.
     \end{aligned}
     \end{equation}

    The following claim connects matchings in $\mathcal{H}$ to $\mathcal{J}$-packings in $\binom{[n]}{k}$.

    \begin{claim}\label{claim:matching-packing}
        Every $\mathcal{C}$-free matching in $\mathcal{H}$ yields a $\mathcal{J}$-packing in $\binom{[n]}{k}$ that satisfies \cref{lem:conflict-free-matching} (i), (ii).
    \end{claim}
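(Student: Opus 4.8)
The plan is to prove \cref{claim:matching-packing} by verifying that a $\mathcal{C}$-free matching $\mathcal{M}$ in $\mathcal{H}$, viewed as a collection of vertex-disjoint-in-$\mathcal{H}$ copies of $\mathcal{J}$, is exactly the object we want. Recall that vertices of $\mathcal{H}$ are blue $k$-sets, and an edge of $\mathcal{H}$ is a copy $\mathcal{J}'$ of $\mathcal{J}$ all of whose $k$-subsets are blue and whose non-edges (the $k$-sets in $\binom{V(\mathcal{J}')}{k}\setminus\mathcal{J}'$) are all red. A matching $\mathcal{M}$ in $\mathcal{H}$ is a set of copies $\mathcal{J}_1,\ldots,\mathcal{J}_{|\mathcal{M}|}$ that are pairwise \emph{vertex-disjoint as edges of $\mathcal{H}$}, i.e. pairwise \emph{edge-disjoint as $k$-graphs} in $\binom{[n]}{k}$. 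This is precisely the definition of a $\mathcal{J}$-packing in \cref{def:packing}, so the matching immediately yields a $\mathcal{J}$-packing $\mathcal{P}=\{(V(\mathcal{J}_i),\mathcal{J}_i):i\in[|\mathcal{M}|]\}$.

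Next I would check condition (i), that $\mathcal{P}$ is induced. This is where the red/blue coloring does its work: it is built in precisely to force the shared structure of any two copies to look like a non-edge. Take distinct $i,i'$ and let $W=V(\mathcal{J}_i)\cap V(\mathcal{J}_{i'})$. The key observation is that the only configuration an $\ell$-tuple of copies can form with few vertices is governed by $\mathcal{C}$; for $\ell=2$, a pair with $|V(\mathcal{J}_i)\cap V(\mathcal{J}_{i'})|\ge k+1$ would give $|V(\mathcal{J}_i)\cup V(\mathcal{J}_{i'})|\le 2m-(k+1)=2m-k-1$, a $2^-$-configuration, which is ruled out because such a pair lies in $\mathcal{C}$ and $\mathcal{M}$ is $\mathcal{C}$-free. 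Hence $|W|\le k$. If $|W|=k$, then $W$ is a single $k$-set; since $W\subseteq V(\mathcal{J}_i)$ and $W$ has exactly $k$ elements, $W$ is either an edge or a non-edge of $\mathcal{J}_i$, and likewise for $\mathcal{J}_{i'}$. But the two copies are edge-disjoint in $\binom{[n]}{k}$, so $W$ cannot be an edge of both; and I would argue via the coloring that if $W$ were an edge of one copy (hence blue) it must be a non-edge of the other (hence the defining condition of $\mathcal{H}$ would force it red), a contradiction. Thus $W$ is a non-edge of each copy, giving exactly the induced condition of \cref{def:packing}.

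I would then verify condition (ii), that $\mathcal{V}=\{V(\mathcal{J}_i):i\in[|\mathcal{M}|]\}$ is $\ell^-$-free for all $2\le\ell\le e$. Suppose for contradiction that some $\ell$ of the vertex sets, with $2\le\ell\le e$, form an $\ell^-$-configuration, i.e. $|\bigcup_{i=1}^{\ell}V(\mathcal{J}_i)|\le \ell m-(\ell-1)k-1$. Among all sub-collections of $\mathcal{M}$ that form an $\ell'^-$-configuration for some $\ell'\le e$, pick one of minimum size $\ell'$; by minimality every proper subset $S\subsetneq[\ell']$ satisfies $|\bigcup_{i\in S}V(\mathcal{J}_i)|\ge |S|m-(|S|-1)k$. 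This is exactly the defining condition \eqref{eq:def-C} of membership in $\mathcal{C}$, so these copies form an edge of $\mathcal{C}$ contained in $\mathcal{M}$, contradicting that $\mathcal{M}$ is $\mathcal{C}$-free. Here I should note the minimal index $\ell'$ is at least $2$: a single copy has exactly $m$ vertices, so no $1^-$-configuration can arise, which anchors the minimization and matches the range $\ell\ge 2$ in the definition of $\mathcal{C}$.

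The main obstacle I anticipate is the careful bookkeeping in the induced-packing step when $|W|=k$: making the coloring argument fully rigorous requires pinning down the exact defining property of $\mathcal{H}$'s edges (blue on edges, red on non-edges of each copy) and checking there is no edge case where a shared $k$-set could simultaneously be consistent with being an edge of one copy and a non-edge of the other without violating the global coloring. The rest is essentially a translation between the language of matchings in $\mathcal{H}$ and packings in $\binom{[n]}{k}$, together with the minimality argument for (ii), both of which are routine once the definitions are unwound; the real content has already been front-loaded into the construction of $\mathcal{C}$ via \eqref{eq:def-C}, so that the $\mathcal{C}$-freeness of $\mathcal{M}$ delivers (i) and (ii) almost immediately.
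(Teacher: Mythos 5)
Your proposal is correct and follows essentially the same route as the paper: matchings in $\mathcal{H}$ translate to edge-disjoint copies of $\mathcal{J}$, condition (ii) follows from $\mathcal{C}$-freeness via minimal bad configurations, and condition (i) follows from the $\ell=2$ case plus the blue/red coloring contradiction for a shared $k$-set. The only difference is one of detail: where the paper simply asserts that (ii) holds ``automatically,'' you spell out the minimality argument (picking a smallest bad sub-collection, which then satisfies the defining conditions of \eqref{eq:def-C} and hence lies in $\mathcal{C}$), which is exactly the intended justification.
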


    \begin{proof}
        The claim above can be verified straightforwardly by definition. To see this, let $\mathcal{M}\subseteq\mathcal{H}$ be a $\mathcal{C}$-free matching. Note that the edges in $\mathcal{M}$ are also copies of $\mathcal{J}$. These copies are pairwise edge disjoint as $k$-graphs in $\binom{[n]}{k}$, since they are pairwise vertex disjoint as edges (of size $|\mathcal{J}|$) in $\mathcal{H}$. Therefore, the edges in $\mathcal{M}$ yield a $\mathcal{J}$-packing in $\binom{[n]}{k}$, denoted by $\mathcal{P}$. As $\mathcal{M}$ is $\mathcal{C}$-free, $\mathcal{P}$ automatically satisfies \cref{lem:conflict-free-matching} (ii).

    It remains to show that $\mathcal{P}$ also satisfies \cref{lem:conflict-free-matching} (i), i.e., it is induced. Indeed, let $\mathcal{J}'$ and $\mathcal{J}''$ be two distinct copies of $\mathcal{J}$ in $\mathcal{P}$. As $\mathcal{P}$ satisfies \cref{lem:conflict-free-matching} (ii) with $\ell=2$, it is clear that $|V(\mathcal{J}')\cap V(\mathcal{J}'')|\le k$. Suppose that $|V(\mathcal{J}')\cap V(\mathcal{J}'')|=k$. We will show that $V(\mathcal{J}')\cap V(\mathcal{J}'')$ is neither an edge in $\mathcal{J}'$ nor an edge in $\mathcal{J}''$. Otherwise, assume that $V(\mathcal{J}')\cap V(\mathcal{J}'')$ is an edge in $\mathcal{J}'$. Then by definition, it is colored blue in $\mathcal{J}'$. However, as $\mathcal{J}'$ and $\mathcal{J}''$ are edge disjoint copies of $\mathcal{J}$, $V(\mathcal{J}')\cap V(\mathcal{J}'')$ has to be colored red in $\mathcal{J}''$, which is impossible.
    \end{proof}

    Given \cref{claim:matching-packing}, to prove \cref{lem:conflict-free-matching}, it suffices to show that there exists a near-optimal $\mathcal{C}$-free matching in $\mathcal{H}$. We will prove it by applying \cref{lem:usefule-lem} with $\epsilon\in(0,\frac{1}{2(m-k)})$ and $d=d_0=cn^{m-k}$ where $c$ is constant depending on $\mathcal{J}$, $\epsilon$, $m$ and $k$ (see the proof of \cref{claim-concentration} for details). For this purpose, below we will show that $\mathcal{H}$ and $\mathcal{C}$ satisfy the assumptions of \cref{lem:usefule-lem}.

    First, let us consider $\mathcal{H}$. By the definition of $\mathcal{H}$, $|V(\mathcal{H})|$ can be viewed as the sum of $n\choose k$ independent indicator random variables, i.e.,
    $$|V(\mathcal{H})|=\sum_{K\in \binom{[n]}{k}}X_{K},$$
    where $X_{K}$ is the indicator random variable for the event that $K$ is colored blue. Therefore, $\mathbb{E}[|V(\mathcal{H})|]=\sum_{K\in \binom{[n]}{k}}\Pr[X_{K}=1]=(1-\epsilon)\binom{n}{k}$. By Hoeffding inequality (see, e.g., \cite{alon2016probabilistic}), one can show that with high probability $|V(\mathcal{H})|=(1\pm 2\epsilon)\binom{n}{k}$. Moreover, for sufficiently large $n$ we have $|V(\mathcal{H})|\le n^k\le\mbox{exp}(d^{\ep^3})$. The following claim shows that $\mathcal{H}$ is approximately regular.
    \begin{claim}\label{claim-concentration}
         With high probability, every vertex in $\mathcal{H}$ is contained in $(1\pm d^{-\epsilon})d$ edges.
    \end{claim}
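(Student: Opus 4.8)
The plan is to condition on the event that $K$ is blue (so that $K$ is indeed a vertex of $\mathcal{H}$), compute the conditional expectation of $\deg_{\mathcal{H}}(K)$, show that $\deg_{\mathcal{H}}(K)$ concentrates around this expectation via a bounded-differences argument, and finally take a union bound over the at most $\binom{n}{k}$ choices of $K$.

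First I would compute the expected degree. Fix a $k$-set $K$ and condition on it being blue. Let $D_n$ denote the number of copies of $\mathcal{J}$ in $\binom{[n]}{k}$ that contain $K$ as an edge; this quantity is deterministic and, by a routine count (choose the remaining $m-k$ vertices and an embedding of $\mathcal{J}$ sending one of its edges onto $K$), satisfies $D_n=Cn^{m-k}+O(n^{m-k-1})$ for a constant $C=C(\mathcal{J},m,k)$. A fixed such copy becomes an edge of $\mathcal{H}$ precisely when its other $|\mathcal{J}|-1$ edges are blue and the $\binom{m}{k}-|\mathcal{J}|$ non-edges inside its vertex set are red, an event of probability $p:=(1-\epsilon)^{|\mathcal{J}|-1}\epsilon^{\binom{m}{k}-|\mathcal{J}|}$ that is independent of the color of $K$. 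By linearity of expectation, $\mathbb{E}[\deg_{\mathcal{H}}(K)\mid K\text{ blue}]=D_n p=(1+O(1/n))\,d$, where I set $d:=cn^{m-k}$ with $c:=Cp$; this is exactly the choice of $c$ promised earlier. Since $\epsilon<\frac{1}{2(m-k)}<\frac{1}{m-k}$, we have $1/n\le\tfrac12 d^{-\epsilon}$ for large $n$, so the conditional mean lies within $\tfrac12 d^{1-\epsilon}$ of $d$.

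Next I would prove concentration. Conditioned on $K$ being blue, $\deg_{\mathcal{H}}(K)$ is a function of the independent colors of the $k$-sets $L$ that lie inside some $m$-set containing $K$. This function has small bounded differences: flipping the color of a single such $L$ with $|L\setminus K|=j\ge1$ can only change the ``properly coloured'' status of copies that contain both $K$ (as an edge) and $L$ (inside their vertex set), of which there are at most $c_L=O(n^{m-k-j})$. As there are $O(n^{j})$ sets $L$ with $|L\setminus K|=j$, the sum of squared differences is
\[
\sum_{L}c_L^2=O\!\left(\sum_{j\ge1}n^{j}\cdot n^{2(m-k-j)}\right)=O\!\left(n^{2(m-k)-1}\right),
\]
the dominant contribution coming from $j=1$. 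Applying McDiarmid's bounded-differences inequality,
\[
\Pr\!\left[\,\bigl|\deg_{\mathcal{H}}(K)-\mathbb{E}[\deg_{\mathcal{H}}(K)\mid K\text{ blue}]\bigr|>\tfrac12 d^{1-\epsilon}\,\right]\le 2\exp\!\left(-\Omega\!\left(n^{\,1-2(m-k)\epsilon}\right)\right).
\]
Here the constraint $\epsilon<\frac{1}{2(m-k)}$ is precisely what makes the exponent $1-2(m-k)\epsilon$ positive, so this failure probability is $\exp(-n^{\Omega(1)})$. Combining with the previous paragraph, a fixed blue $K$ satisfies $\deg_{\mathcal{H}}(K)\in(1\pm d^{-\epsilon})d$ with probability $1-\exp(-n^{\Omega(1)})$, and a union bound over the at most $n^{k}$ choices of $K$ finishes the proof.

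The main obstacle is the second step: setting up the bounded-difference constants $c_L$ correctly and verifying that their squares sum to $O(n^{2(m-k)-1})$ rather than something larger. Everything hinges on the observation that a single re-coloured $k$-set affects only copies of $\mathcal{J}$ whose $m$-vertex host contains that set, so overlaps of size $k+j$ with $j\ge1$ contribute the genuinely lower-order term $n^{m-k-j}$; this is what keeps the Lipschitz sum at $n^{2(m-k)-1}$ and, together with the hypothesis $\epsilon<\frac{1}{2(m-k)}$, yields a positive exponent in the concentration bound.
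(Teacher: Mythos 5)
Your proof is correct and follows essentially the same approach as the paper's: both compute the expected degree via the count of copies of $\mathcal{J}$ through $K$ times the probability $p=(1-\epsilon)^{|\mathcal{J}|-1}\epsilon^{\binom{m}{k}-|\mathcal{J}|}$, both apply the bounded-differences (McDiarmid) inequality with Lipschitz constants $O(n^{m-k-j})$ for $k$-sets overlapping $K$ in $k-j$ vertices so that the squared sum is $O(n^{2(m-k)-1})$, and both finish with a union bound over the at most $n^k$ vertices. Your treatment is in fact slightly more careful than the paper's in explicitly conditioning on $K$ being blue and in accounting for the $(1+O(1/n))$ discrepancy between the exact conditional mean and $d=cn^{m-k}$ before invoking concentration.
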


   \cref{claim-concentration} can be proved by the well-known bounded difference inequality (see \cref{lem:BDI}) and we postpone its proof to \cref{appendix-concentration}.

   For arbitrary two distinct vertices $T_1,T_2\in V(\mathcal{H})$, as they are distinct $k$-subsets of $[n]$, it is easy to see that $|T_1\cup T_2|\ge k+1$. Then we have
    $$\deg _{\mathcal{H}}(\{T_1,T_2\})=|\{\mathcal{J}'\in\mathcal{H}:\{T_1,T_2\}\subseteq\mathcal{J}'\}|\le n^{m-k-1},$$

    \noindent where the inequality holds since for fixed $T_1$ and $T_2$, we have at most $n^{m-k-1}$ choices for $V(\mathcal{J}')\setminus(T_1\cup T_2)$. Hence, by our choice of $\epsilon$, we have $\Delta_2(\mathcal{H})=\Theta(n^{m-k-1})\le d^{1-\ep}$.

    It remains to verify that $\mathcal{H}$ and $\mathcal{C}$ satisfy \cref{lem:usefule-lem} (C1)-(C4). We will need the following observation: For every edge $\{\mathcal{J}_1\ldots,\mathcal{J}_{\ell}\}\in \mathcal{C}$ and every $S\subsetneq [\ell]$ with size $1\le \ell'<\ell$, $\cup_{i\in [\ell]/S}V(\mathcal{J}_i)$ contains at most $(\ell-\ell')(m-k)-1$ vertices outside $\cup _{i\in S}V(\mathcal{J}_i)$. To see this, observe that
    \begin{equation}\label{eq:obs}
        \begin{aligned}
             \left|\bigcup_{i\in [\ell]/S}V(\mathcal{J}_i)\big/\bigcup_{j\in S}V(\mathcal{J}_j)\right|&= \left|\bigcup_{i\in [\ell]}V(\mathcal{J}_i)\big/\bigcup_{j\in S}V(\mathcal{J}_j)\right|\\
         &\le (\ell m-(\ell-1)k-1)-(\ell' m-(\ell'-1)k)\\
         &=(\ell-\ell')(m-k)-1,
    \end{aligned}
    \end{equation}

    \noindent where the inequality follows from the definition of $\mathcal{C}$ (see \eqref{eq:def-C}).
    For fixed $1\le \ell'< \ell$ and every subset $\mathcal{S}\subseteq V(\mathcal{C})=\mathcal{H}$ with size $\ell'$, we have
     \begin{equation}\label{eq:c1-c4}
         \begin{aligned}
         \deg_{\mathcal{C}^{(\ell)}}(\mathcal{S})=|\{\{\mathcal{J}_1\ldots,\mathcal{J}_{\ell}\}\in\mathcal{C}^{(\ell)}:\mathcal{S}\subseteq \{\mathcal{J}_1\ldots,\mathcal{J}_{\ell}\}\}|= O\left(n^{(\ell-\ell')(m-k)-1}\right),
     \end{aligned}
     \end{equation}

     \noindent where the second equality holds since by \eqref{eq:obs}, given $\mathcal{S}$, we have at most $n^{(\ell-\ell')(m-k)-1}$ choices for $\cup_{\mathcal{J}_i\in \{\mathcal{J}_1\ldots,\mathcal{J}_{\ell}\}/\mathcal{S}}V(\mathcal{J}_i)\setminus\cup_{\mathcal{J}_i\in \mathcal{S}}V(\mathcal{J}_j)$.

     Recall that $d =cn^{m-k}$. Setting $\ell'=1$ in \eqref{eq:c1-c4} yields that $$\Delta_1(\mathcal{C}^{(\ell)})=O\left(n^{(\ell-1)(m-k)-1}\right)=O\left(d^{\ell-1-\frac{1}{m-k}}\right)= O\left(d^{\ell-1}\right),$$ which verifies \cref{lem:usefule-lem} (C1). Similarly, by the choice of $\epsilon$, for all $2\le\ell'<\ell$ we have
     $$\Delta_{\ell'}(\mathcal{C}^{(\ell)})=O\left(n^{(\ell-\ell')(m-k)-1}\right)=O\left(d^{\ell-\ell'-\frac{1}{m-k}}\right)= O\left(d^{\ell-\ell'-\epsilon}\right),$$ which verifies \cref{lem:usefule-lem} (C2). Lastly, \cref{lem:usefule-lem} (C3), (C4) hold trivially as by \eqref{eq:obs}, $\Delta_1(\mathcal{C}^{(2)})=O(n^{m-k-1})=O(d^{1-\epsilon})$.

     Therefore, we have shown that $\mathcal{H}$ and $\mathcal{C}$ satisfy all the assumptions of \cref{lem:usefule-lem}. Consequently, for sufficiently large $n$, there exists a $\mathcal{C}$-free matching $\mathcal{M}\subseteq\mathcal{H}$ that covers at least $(1-d^{-\ep^3})|V(\mathcal{H})|\ge(1-3\epsilon)\binom{n}{k}$ vertices of $\mathcal{H}$, which implies that $|\mathcal{M}|\ge(1-3\epsilon)\binom{n}{k}/|\mathcal{J}|$. Combining \cref{claim:matching-packing} and the above lower bound on $|\mathcal{M}|$ conclude the proof of the lemma.
\end{proof}

\subsection{Proof of \cref{lem:H(F)}}

\noindent The following easy fact is needed for the proof of \cref{lem:H(F)}.

\begin{fact}\label{fact:pri of xor}
    Given positive integers $s$ and $n$, let $X_1,X_2,\cdots,X_s$ be distinct subsets of $[n]$. Let
    $$f(X_1,X_2,\cdots,X_s)=\sum_{i=1}^s|X_i|-|\bigcup_{i=1}^s X_i|.$$
    Then for $X_1',\ldots,X_s'\subseteq [n]$ with $X_i\subseteq X_i'$ for every $1\le i\le s$, we have $$f(X_1,X_2,\cdots,X_s)\le f(X_1',X_2',\cdots,X_s').$$
\end{fact}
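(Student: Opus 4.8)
The plan is to rewrite $f$ so that its monotonicity under enlarging the sets becomes transparent. For subsets $Y_1,\ldots,Y_s\subseteq[n]$ and a vertex $v\in[n]$, write $d(v):=|\{i\in[s]:v\in Y_i\}|$ for the number of the $Y_i$ containing $v$. Counting incidences vertex by vertex gives $\sum_{i=1}^s|Y_i|=\sum_{v\in[n]}d(v)$, whereas $|\bigcup_{i=1}^s Y_i|$ is exactly the number of vertices $v$ with $d(v)\ge 1$. Subtracting, every vertex with $d(v)=0$ contributes $0$ and every vertex with $d(v)\ge 1$ contributes $d(v)-1$, so
\begin{align*}
    f(Y_1,\ldots,Y_s)=\sum_{v\in[n]}\max\{d(v)-1,\,0\}.
\end{align*}

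Next I would apply this identity to both tuples. Let $d(v)$ and $d'(v)$ denote the degrees computed from $X_1,\ldots,X_s$ and from $X_1',\ldots,X_s'$, respectively. Because $X_i\subseteq X_i'$ for every $i$, any set containing $v$ before the enlargement still contains $v$ afterwards, so $d'(v)\ge d(v)$ for every vertex $v$. Since $x\mapsto\max\{x-1,0\}$ is non-decreasing, each summand can only grow, and summing over $v$ yields $f(X_1,\ldots,X_s)\le f(X_1',\ldots,X_s')$, as desired. Note that the distinctness of the $X_i$ plays no role here; it is merely inherited from the hypergraph context in which the fact is applied.

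As an alternative I could argue by a one-element induction: it suffices to treat the case where a single new element is added to a single set, since any inclusion $X_i\subseteq X_i'$ is realized by finitely many such steps. Adding one element to one set increases $\sum_i|X_i|$ by exactly $1$ and increases $|\bigcup_i X_i|$ by either $0$ or $1$, so $f$ never decreases, and iterating gives the claim. I do not expect any genuine obstacle in this proof; the only idea required is the degree-sum reformulation of $f$ above, after which monotonicity is immediate from the fact that the overlap can only increase when the sets grow.
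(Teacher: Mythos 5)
Your proposal is correct, and your primary argument takes a genuinely different route from the paper's. The paper's own proof proceeds by a one-element-at-a-time induction: it adds a single new element $x$ to a single set $X_1$ and computes the resulting change in $f$ via an inclusion--exclusion expansion over the collection $T=\{X_i : x\in X_i\}$, showing the change equals $1$ when $T\neq\emptyset$ (via an alternating sum of binomial coefficients) and $0$ when $T=\emptyset$, and then iterates. Your main proof instead rewrites $f$ globally as $f(Y_1,\ldots,Y_s)=\sum_{v\in[n]}\max\{d(v)-1,0\}$, where $d(v)$ counts the sets containing $v$; monotonicity is then immediate, since enlarging the sets can only increase each $d(v)$ and $x\mapsto\max\{x-1,0\}$ is non-decreasing. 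This buys you three things: it avoids induction entirely, it replaces the paper's inclusion--exclusion computation with a one-line double count, and it makes explicit (as you note) that distinctness of the $X_i$ is irrelevant --- a point worth having, since the intermediate tuples arising in any stepwise argument need not remain distinct. Your alternative argument coincides with the paper's strategy, but even there your justification of the single step (adding one element raises $\sum_i|X_i|$ by exactly $1$ and $|\bigcup_i X_i|$ by $0$ or $1$) is simpler than the paper's calculation; both are valid.
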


\begin{proof}[Proof of \cref{lem:H(F)}]
    For fixed integers $\ell,e$ with $2\le\ell\le e$, let $F_1,\dots,F_{\ell}$ be arbitrary $\ell$ distinct edges in $\mathcal{H}(\mathcal{F})$, being contained in some $s$ distinct copies of $\mathcal{F}$, say,  $\mathcal{F}_1,\dots,\mathcal{F}_s$, where $s\ge 1$. For each $i\in [s]$, let $L_i \subseteq [\ell]$ be the set of $j$'s with $F_j\in\mathcal{F}_i$ and let $|L_i|=\ell_i$. Then $\{L_i:i\in [s]\}$ partitions $[\ell]$ and $\sum_{i=1}^s \ell_i=\ell$. For each $i\in[s]$, let $X_i = \cup_{j \in L_i} V(F_j)$. It is clear that $X_i\subseteq V(\mathcal{F}_i)$. Then, it follows from \cref{fact:pri of xor} and \cref{lem:H(F)} (i), (ii) that
    \begin{align*}
        \left|\bigcup_{i=1}^\ell F_i\right|&=\left|\bigcup_{i=1}^s X_i\right|=\sum_{i=1}^s |X_i|-f(X_1,\cdots,X_s)\\
        &\ge \sum_{i=1}^s |X_i|-f(V(\mathcal{F}_1),\cdots,V(\mathcal{F}_s))\\
        &=\sum_{i=1}^s|X_i|-\left(\sum_{i=1}^s|V(\mathcal{F}_i)|-\left|\bigcup_{i=1}^sV(\mathcal{F}_i)\right|\right)\\
        &\ge \sum_{i=1}^s\left(\ell_i\cdot tk-(\ell_i-1)k\right)-\left(sm-(sm-(s-1)k)\right)\\
        &=\ell\cdot tk-(\ell-1)k,
    \end{align*}
    as needed.
\end{proof}

\section{Proof of \cref{thm:main-canc}}\label{sec:thm:main-canc}

\subsection{The lower bound}\label{subsec:thm:main-canc-lower}

\noindent The goal of this subsection is to establish the lower bound of the limit in \cref{thm:main-canc}, i.e., for every fixed $\epsilon>0$ there exists an $n_0=n_0(t,k,\epsilon)$ such that for all $n\ge n_0$, we have $C_{2(t-1)}(n,tk)\ge(1-\epsilon)\cdot \binom{n}{k} / \binom{tk-1}{k-1}$.

The following lemma is the main technical result of this subsection. It shows that if $\mathcal{F}$ and $\mathcal{P}$ satisfy some nice properties, then the hypergraph $\mathcal{H}(\mathcal{F})$ defined in \cref{construction} is $2(t-1)$-cancellative.

\begin{lemma}\label{lem:canc-LB}
    Let $\mathcal{F}$ be a $2(t-1)$-cancellative $tk$-graph with $|V(\mathcal{F})|=m$ and $\mathcal{J}(\mathcal{F})$ be its $k$-shadow. Let $\mathcal{P}=\{(V(\mathcal{J}_i),\mathcal{J}_i):i\in[|\mathcal{P}|]\}$ be an induced $\mathcal{J}(\mathcal{F})$-packing in $\binom{[n]}{k}$. Suppose that $\mathcal{F}$ and $\mathcal{P}$ further satisfy the following properties:
    \begin{itemize}
        \item [{\rm (i)}] $\mathcal{F}$ is $\ell^-$-free (i.e., $(\ell\cdot tk-(\ell-1)k-1,\ell)$-free) for all $2\le\ell\le 2t$;
        \item [{\rm (ii)}] $\mathcal{V}=\{V(\mathcal{J}_i):i\in[|\mathcal{P}|]\}$ is $\ell^-$-free (i.e., $(\ell\cdot m-(\ell-1)k-1,\ell)$-free) for all $2\le\ell\le 2t$.
    \end{itemize}
    \noindent Then the $tk$-graph $\mathcal{H}(\mathcal{F})$ defined in \cref{construction} is $2(t-1)$-cancellative.
\end{lemma}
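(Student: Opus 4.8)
The plan is to argue by contradiction. Suppose $\mathcal{H}(\mathcal{F})$ is not $2(t-1)$-cancellative, so there are $2t$ distinct edges $A_1,\dots,A_{2t-2},B,C$ with $B\Delta C\subseteq\bigcup_{i=1}^{2t-2}A_i$. Write $U=\bigcup_{i=1}^{2t-2}A_i$. Note first that each edge of $\mathcal{H}(\mathcal{F})$ lies in a unique copy $\mathcal{F}_i$, since two copies are edge-disjoint on the shadow and a shared edge would put a common $k$-set into both $\mathcal{J}_i$ and $\mathcal{J}_{i'}$. This also yields two overlap facts used throughout: two distinct edges in the same copy meet in at most $k$ vertices (as $\mathcal{F}$ is $2^-$-free by (i)), whereas two edges hosted by different copies meet in at most $k-1$ vertices (a common $k$-set would again lie in both shadows $\mathcal{J}_i,\mathcal{J}_{i'}$, contradicting edge-disjointness of the packing).

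First I would pin down the coarse structure via \cref{lem:H(F)}. By (i), (ii) and \cref{lem:H(F)} applied with $e=2t$, $\mathcal{H}(\mathcal{F})$ is $2t^-$-free, so $|U\cup B\cup C|\ge 2t\cdot tk-(2t-1)k$. On the other hand $B\cup C\subseteq U\cup(B\cap C)$, so this union equals $U\cup(B\cap C)$ and has size at most $|U|+|B\cap C|\le(2t-2)tk+|B\cap C|$. Comparing the two estimates gives $|B\cap C|\ge k$; since edges from different copies meet in at most $k-1$ vertices, $B$ and $C$ must share a host copy $\mathcal{F}_p$, and then $|B\cap C|\le k$ forces $|B\cap C|=k$. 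Feeding this back shows $|U|\ge 2t(t-1)k$, while trivially $|U|\le(2t-2)tk=2t(t-1)k$; hence equality holds and $A_1,\dots,A_{2t-2}$ are pairwise disjoint. In particular $\sum_{i}|A_i\cap(B\Delta C)|=|B\Delta C|=(2t-2)k$.

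The heart of the argument is to show that every $A_i$ is hosted by $\mathcal{F}_p$. Split the indices into $P=\{i:A_i\subseteq V(\mathcal{F}_p)\}$ and $Q=[2t-2]\setminus P$. For $j\in Q$, with $A_j$ hosted by $\mathcal{F}_g$ ($g\ne p$), I would bound its contribution using that $\mathcal{P}$ is induced: $A_j\cap(B\Delta C)\subseteq A_j\cap V(\mathcal{F}_p)=A_j\cap\bigl(V(\mathcal{J}_g)\cap V(\mathcal{J}_p)\bigr)$, and if this set had size $k$ it would be a $k$-subset of $A_j$, hence an edge of $\mathcal{J}_g$ coinciding with $V(\mathcal{J}_g)\cap V(\mathcal{J}_p)$ — contradicting the induced property. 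Thus $|A_j\cap(B\Delta C)|\le k-1$ for every $j\in Q$. For the in-copy edges I would instead apply (i) to the $|P|+2$ distinct edges $\{B,C\}\cup\{A_i:i\in P\}$ of $\mathcal{F}_p$: their union has size at least $(|P|+2)tk-(|P|+1)k$, and computing this union by inclusion–exclusion (using that the $A_i$ are pairwise disjoint and $|B\cup C|=2tk-k$) rearranges to $\sum_{i\in P}|A_i\cap(B\cup C)|\le|P|k$, whence $\sum_{i\in P}|A_i\cap(B\Delta C)|\le|P|k$.

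Combining the two bounds with the identity from the second step,
\[
(2t-2)k=\sum_{i\in P}|A_i\cap(B\Delta C)|+\sum_{j\in Q}|A_j\cap(B\Delta C)|\le|P|k+(k-1)|Q|,
\]
and substituting $|Q|=2t-2-|P|$ forces $|Q|\le 0$, so $Q=\emptyset$. Then $A_1,\dots,A_{2t-2},B,C$ are $2t$ distinct edges all lying in the single copy $\mathcal{F}_p\cong\mathcal{F}$ with $B\Delta C\subseteq\bigcup_i A_i$, contradicting the $2(t-1)$-cancellativity of $\mathcal{F}$. I expect the main obstacle to be making this final count close: the induced property buys exactly one vertex per out-of-copy edge (the strict bound $k-1$), there are at most $2t-2$ such edges, and the aggregate in-copy bound $|P|k$ must come from (i) rather than a crude per-edge estimate; these constants line up with no slack, so the delicate point is verifying that both inequalities hold in the sharp form required.
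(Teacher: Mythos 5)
Your proof is correct; I verified the key counts (the derivation $|B\cap C|\ge k$ from $2t^-$-freeness of $\mathcal{H}(\mathcal{F})$, the exact structure it forces, the in-copy bound $\sum_{i\in P}|A_i\cap(B\cup C)|\le |P|k$ obtained from the $(|P|+2)^-$-freeness of $\mathcal{F}$, and the final inequality forcing $Q=\emptyset$), and the constants close exactly as you anticipated. Your route uses the same ingredients as the paper's proof (\cref{lem:H(F)} with $e=2t$, and the induced-packing bound of \cref{claim:induced}), but the decomposition is genuinely inverted. The paper splits on whether $B,C$ share a copy: if not, all $2t$ edges form a $2t^-$-configuration; if so, it invokes the cancellativity of $\mathcal{F}$ \emph{first} to conclude that the number $s$ of in-copy $A_j$'s satisfies $s<2t-2$, and then shows that $B,C$ together with those $s$ edges form an $(s+2)^-$-configuration — so in both cases the contradiction is with the $\ell^-$-freeness of $\mathcal{H}(\mathcal{F})$, possibly for $\ell<2t$. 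You instead use $2t^-$-freeness only for $\ell=2t$, to force rigidity ($B,C$ in one copy, $|B\cap C|=k$, the $A_i$ pairwise disjoint), then prove all $A_i$ lie in that same copy, and land the final contradiction on the cancellativity of $\mathcal{F}$ itself: freeness and cancellativity swap roles between the two arguments. The paper's version buys brevity — no exact structure is needed and every inequality may be slack; yours buys transparency about why the constants line up with no slack, since the counts become exact identities. One step you should spell out: for $i\in P$ you treat $A_i$ as an edge of $\mathcal{F}_p$, which requires the (easy) observation that $A_i\subseteq V(\mathcal{F}_p)$ forces $A_i\in\mathcal{F}_p$ — if $A_i$ were hosted by another copy, your own induced-property bound would give $|A_i\cap V(\mathcal{F}_p)|\le k-1<tk$, a contradiction.
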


The following claim is very useful.

\begin{claim}\label{claim:induced}
    Let $\mathcal{F}$ be a $tk$-graph and $\mathcal{P}=\{(V(\mathcal{J}_i),\mathcal{J}_i):i\in[|\mathcal{P}|]\}$ be an induced $\mathcal{J}(\mathcal{F})$-packing. Let $\mathcal{H}(\mathcal{F})$ be defined in \cref{construction}. Suppose that $\mathcal{F}_i$ and $\mathcal{F}_{i'}$ are two distinct copies of $\mathcal{F}$ in $\mathcal{H}(\mathcal{F})$. Then for every $F\in\mathcal{F}_i$, we have $|F\cap V(\mathcal{F}_{i'})|\le k-1$. Moreover, if $\mathcal{F}$ is $2^-$-free, then for every distinct $F,F'\in\mathcal{H}(\mathcal{F})$, we have $|F\cap F'|\le k$.
\end{claim}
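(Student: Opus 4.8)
The plan is to prove the two assertions separately, both by reducing to the defining properties of an \emph{induced} packing (\cref{def:packing}) together with the elementary observation that every $k$-subset of an edge of $\mathcal{F}_i$ lies in its $k$-shadow $\mathcal{J}_i=\mathcal{J}(\mathcal{F}_i)$. Throughout I would use the identities $V(\mathcal{F}_i)=V(\mathcal{J}_i)$ and $\mathcal{J}(\mathcal{F}_i)=\mathcal{J}_i$ supplied by \cref{construction}.

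For the first assertion, fix distinct copies $\mathcal{F}_i,\mathcal{F}_{i'}$ and an edge $F\in\mathcal{F}_i$. Since $F\subseteq V(\mathcal{F}_i)=V(\mathcal{J}_i)$, we have $F\cap V(\mathcal{F}_{i'})\subseteq V(\mathcal{J}_i)\cap V(\mathcal{J}_{i'})$, and the induced property gives $|V(\mathcal{J}_i)\cap V(\mathcal{J}_{i'})|\le k$, so that $|F\cap V(\mathcal{F}_{i'})|\le k$. It remains to rule out equality. First I would note that if $|F\cap V(\mathcal{F}_{i'})|=k$, then the containment above forces $F\cap V(\mathcal{F}_{i'})=V(\mathcal{J}_i)\cap V(\mathcal{J}_{i'})$ and $|V(\mathcal{J}_i)\cap V(\mathcal{J}_{i'})|=k$. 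On the one hand, $F\cap V(\mathcal{F}_{i'})$ is a $k$-subset of the edge $F$, hence belongs to $\binom{F}{k}\subseteq\mathcal{J}(\mathcal{F}_i)=\mathcal{J}_i$, i.e., it is an edge of $\mathcal{J}_i$. On the other hand, the induced property asserts that whenever $|V(\mathcal{J}_i)\cap V(\mathcal{J}_{i'})|=k$, the common set is \emph{not} an edge of $\mathcal{J}_i$. This contradiction yields $|F\cap V(\mathcal{F}_{i'})|\le k-1$.

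For the second assertion, I would split on whether the two edges lie in a common copy. If $F,F'$ are distinct edges of the same copy $\mathcal{F}_i$, then since $\mathcal{F}_i$ is a copy of the $2^-$-free hypergraph $\mathcal{F}$, it is $(2tk-k-1,2)$-free, so $|F\cup F'|\ge 2tk-k$; inclusion–exclusion then gives $|F\cap F'|=2tk-|F\cup F'|\le k$. If instead $F\in\mathcal{F}_i$ and $F'\in\mathcal{F}_{i'}$ with $i\ne i'$, then $F'\subseteq V(\mathcal{F}_{i'})$ gives $F\cap F'\subseteq F\cap V(\mathcal{F}_{i'})$, and the first assertion bounds this by $k-1\le k$. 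In either case $|F\cap F'|\le k$.

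The steps are all short; the only point requiring care — the main obstacle such as it is — is the first assertion, where one must combine the shadow inclusion $\binom{F}{k}\subseteq\mathcal{J}_i$ with the precise ``not an edge'' clause of the induced property to upgrade the trivial bound $k$ to $k-1$. The remaining work is a routine inclusion–exclusion computation and a case split, and notably the first assertion does not require $2^-$-freeness, which is only needed for the same-copy case of the second assertion.
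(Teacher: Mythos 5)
Your proof is correct and follows essentially the same route as the paper's: the first assertion via the shadow inclusion $\binom{F}{k}\subseteq\mathcal{J}_i$ combined with the two clauses of the induced-packing property, and the second via the same-copy/different-copy split using $2^-$-freeness and the first assertion, respectively. Your write-up merely makes explicit the case distinction (intersection of size $>k$ versus exactly $k$) that the paper compresses into a single appeal to the induced property.
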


\begin{proof}
    To prove the first half of the claim, it is clear by \cref{construction} that $V(\mathcal{F}_{i})=V(\mathcal{J}_{i})$ and $V(\mathcal{F}_{i'})=V(\mathcal{J}_{i'})$. If $|F\cap V(\mathcal{F}_{i'})|\ge k$, then there exists a $k$-subset $T\subseteq F\subseteq V(\mathcal{J}_{i})$ such that $T\subseteq V(\mathcal{J}_{i})\cap V(\mathcal{J}_{i'})$ and $T\in\binom{F}{k}\subseteq \mathcal{J}(\mathcal{F}_{i})=\mathcal{J}_{i}$, contradicting the assumption that $\mathcal{P}$ is induced.

    To prove the second half of the claim, just note that if $F,F'$ are contained in the same copy of $\mathcal{F}$, then $|F\cap F'|\le k$ is a direct consequence of the $2^-$-freeness of $\mathcal{F}$. If $F,F'$ are contained in distinct copies of $\mathcal{F}$, say $F\in\mathcal{F}_{i}$ and $F'\in\mathcal{F}_{i'}$, where $i\neq i'$, then according to the previous discussion, we have $|F\cap F'|\le|F\cap V(\mathcal{F}_{i'})|\le k-1$, as needed.
\end{proof}

Now we are ready to present the proof of \cref{lem:canc-LB}.

\begin{proof}[Proof of \cref{lem:canc-LB}]
    First of all, applying \cref{lem:H(F)} with $e=2t$ one can easily see that $\mathcal{H}(\mathcal{F})$ is $\ell^-$-free for all $2\le\ell\le 2t$. Suppose for the sake of contradiction that $\mathcal{H}(\mathcal{F})$ is not $2(t-1)$-cancellative. Then there exist $2t$ distinct edges, say $F_1,\ldots,F_{2t-2},F,F'\in\mathcal{H}(\mathcal{F})$, such that \begin{align}\label{eq:cancellative-not}
        F\Delta F'\subseteq\bigcup_{i=1}^{2t-2}F_i.
    \end{align}

    \paragraph{Case 1.} Suppose that $F$ and $F'$ belong to the same copy, say $\mathcal{F}_i$, of $\mathcal{F}$. Since $\mathcal{F}$ is $2^-$-free, we have $|F\cap F'|\le k$. Let $S\subseteq [2t-2]$ be the set of $j$'s with $F_j\in \mathcal{F}_i$ and let $|S|=s$. Since $\mathcal{F}$ is $2(t-1)$-cancellative, we have $0\le s<2(t-1)$. Moreover, it follows from \cref{claim:induced} that for each $j\in [2t-2]\setminus S$,
    \begin{align}\label{eq:induced}
        |F_j\cap (F\Delta F')|\le |F_j\cap V(\mathcal{F}_i)|\le k-1.
    \end{align}

    \noindent Consequently, we have
    \begin{align*}
        \left|F\cup F'\cup \left(\bigcup_{j\in S} F_j\right)\right|= &\left|\bigcup_{j\in S} F_j\right|+\left|(F\cup F')\big/\bigcup_{j\in S} F_j\right|\\
        \le&\left|\bigcup_{j\in S} F_j\right|+|F\cap F'|+|F\Delta F'|-\left|(F\Delta F')\cap\left(\bigcup_{j\in S}F_j\right)\right|\\
        \le&\left|\bigcup_{j\in S} F_j\right|+|F\cap F'|+\left|(F\Delta F')\cap\left(\bigcup_{j\in [2t-2]\setminus S}F_j\right)\right|\\
        \le&s\cdot tk+k+(2t-2-s)\cdot (k-1)\\
        =&(s+2)\cdot tk-(s+1)k-(2t-2-s)\\
        \le&(s+2)\cdot tk-(s+1)k-1,
    \end{align*}

    \noindent where the second inequality follows from \eqref{eq:cancellative-not} and the third inequality follows from \eqref{eq:induced}. This implies that $F,F'$ and $\{F_j:j\in S\}$ form a $(s+2)^-$-configuration, where $2\le s+2< 2t$, which contradicts the fact that $\mathcal{H}(\mathcal{F})$ is $\ell^-$-free for all $2\le\ell\le 2t$.

   \paragraph{Case 2.} Suppose that $F$ and $F'$ belong to different copies of $\mathcal{F}$, say $F\in\mathcal{F}_i$ and $F'\in\mathcal{F}_{i'}$, $i\ne i'$. Then, it follows from \cref{claim:induced} that $|F\cap F'|\le|F\cap V(\mathcal{F}_{i'})|\le k-1$. Therefore, we have
   \begin{align*}
       \left|(F\cup F')\cup\left(\bigcup_{j=1}^{2t-2}F_j\right)\right|=&\left|\bigcup_{j=1}^{2t-2}F_j\right|+\left|(F\cup F')/\bigcup_{j=1}^{2t-2}F_j\right|\le\left|\bigcup_{j=1}^{2t-2}F_j\right|+|F\cap F'|\\
       \le&(2t-2)\cdot tk+(k-1)=2t\cdot tk-(2t-1)k-1,
   \end{align*}
   where the first inequality follows from \eqref{eq:cancellative-not}. This implies that $F,F'$ and $\{F_j:1\le j\le 2t-2\}$ form a $2t^-$-configuration, which contradicts the fact that $\mathcal{H}(\mathcal{F})$ is $2t^-$-free.
\end{proof}

Next, we will construct a $tk$-graph $\mathcal{F}$ that satisfies the assumptions of \cref{lem:canc-LB} and has large enough ratio $\frac{|\mathcal{F}|}{|\mathcal{J}(\mathcal{F})|}$. Let $m_0$ be an integer that will be determined later. Let $\mathcal{G}=\{G_i:i\in[|\mathcal{G}|]\}\subseteq\binom{[m_0]}{tk-1}$ be a $(tk-1)$-graph obtained by applying \cref{lem:e^--free} with $r=tk-1$ and $e=2t$. Let $U=\{u_i:i\in[|\mathcal{G}|]\}$ be a set of $|\mathcal{G}|$ distinct vertices disjoint from $V(\mathcal{G})=[m_0]$. Let $\mathcal{F}$ be the $tk$-graph formed by pairing up the edges in $\mathcal{G}$ and the vertices in $U$, that is,
\begin{align}\label{eq:canc-F}
    \mathcal{F}=\left\{G_i\cup \{u_{i}\}:i\in[|\mathcal{G}|]\right\}.
\end{align}

\noindent Then
\begin{align}\label{eq:F}
    |\mathcal{F}|=|\mathcal{G}|\ge cm_0^{k+\fr{1}{2t-1}},
\end{align}

\noindent where $c=c(tk-1,k,2t)$ is a constant independent of $m_0$. Pick $m_0=m_0(t,k)\ge(2c^{-1}\epsilon^{-1})^{2t-1}$ so that
\begin{align}\label{eq:m}
    \frac{1}{c^{-1}m_0^{-\frac{1}{2t-1}}+\binom{tk-1}{k-1}}\ge(1-\epsilon/2)\cdot\frac{1}{\binom{tk-1}{k-1}}.
\end{align}

We have the following lemma on the properties of $\mathcal{F}$.

\begin{lemma}\label{lem:canc-LB-F}
Let $\mathcal{F}$ be the $tk$-graph defined as above. Then
\begin{itemize}
    \item [{\rm (i)}] $\mathcal{F}$ is $2(t-1)$-cancellative and $\ell^-$-free for all $2\le\ell\le 2t$;
    \item [{\rm (ii)}] $\frac{|\mathcal{F}|}{|\mathcal{J}(\mathcal{F})|}\ge (1-\epsilon/2)\cdot\frac{1}{\binom{tk-1}{k-1}}$.
\end{itemize}
\end{lemma}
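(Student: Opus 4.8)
The plan is to verify the two items essentially independently: the cancellative half of (i) comes almost for free from the pendant vertices $u_i$ in the construction \eqref{eq:canc-F}, whereas the $\ell^-$-freeness in (i) and the ratio bound in (ii) both rely on the properties of $\mathcal{G}$ supplied by \cref{lem:e^--free} (the $\ell^-$-freeness of $\mathcal{G}$ and the lower bound \eqref{eq:F} on $|\mathcal{G}|$, respectively).

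For the cancellative half of (i) I would argue by contradiction using the equivalent formulation $B\Delta C\nsubseteq\bigcup_i A_i$ of $2(t-1)$-cancellativity. If $\mathcal{F}$ failed to be $2(t-1)$-cancellative, there would be $2t$ distinct edges $A_1,\dots,A_{2t-2},B,C\in\mathcal{F}$ with $B\Delta C\subseteq\bigcup_{i=1}^{2t-2}A_i$. Writing each edge as $G_j\cup\{u_j\}$, distinctness of the edges forces the indices to be pairwise distinct, since $u_j$ determines the edge. Hence the private vertex $u_b$ of $B$ lies in $B\setminus C\subseteq B\Delta C$, while $u_b$ lies in none of the $A_i$ (as $u_b\notin[m_0]\supseteq G_{a_i}$ and $u_b\neq u_{a_i}$ for every $i$), contradicting $B\Delta C\subseteq\bigcup_i A_i$. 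Note this argument never invokes the value $2(t-1)$ and in fact shows $\mathcal{F}$ is $s$-cancellative for every $s$; the unique pendant vertices do all the work here.

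For the $\ell^-$-free half of (i), I would fix $2\le\ell\le 2t$ and arbitrary distinct edges $G_{j_1}\cup\{u_{j_1}\},\dots,G_{j_\ell}\cup\{u_{j_\ell}\}$. Since the $u_{j_p}$ are distinct and disjoint from $[m_0]$, the union of these edges has size $\bigl|\bigcup_p G_{j_p}\bigr|+\ell$. As $\mathcal{G}$ is $\ell^-$-free as a $(tk-1)$-graph, i.e. $(\ell(tk-1)-(\ell-1)k-1,\ell)$-free, we have $\bigl|\bigcup_p G_{j_p}\bigr|\ge \ell(tk-1)-(\ell-1)k$, so the union of the edges of $\mathcal{F}$ has size at least $\ell\cdot tk-(\ell-1)k$, which is exactly the threshold defining $\ell^-$-freeness for a $tk$-graph. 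This handles (i) for all $2\le\ell\le 2t$.

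For (ii) I would bound the shadow $\mathcal{J}(\mathcal{F})$ from above by splitting its $k$-sets into those meeting $U$ and those contained in $[m_0]$. A $k$-set meeting $U$ lies in a single edge $G_i\cup\{u_i\}$ and must contain the private vertex $u_i$, so there are exactly $|\mathcal{G}|\binom{tk-1}{k-1}$ of them; the remaining ones are $k$-subsets of $[m_0]$, of which there are at most $\binom{m_0}{k}$. Thus $|\mathcal{J}(\mathcal{F})|\le|\mathcal{G}|\binom{tk-1}{k-1}+\binom{m_0}{k}$, and since $|\mathcal{F}|=|\mathcal{G}|$,
\[
\frac{|\mathcal{F}|}{|\mathcal{J}(\mathcal{F})|}\ge\frac{1}{\binom{tk-1}{k-1}+\binom{m_0}{k}/|\mathcal{G}|}.
\]
Using $\binom{m_0}{k}\le m_0^k$ together with $|\mathcal{G}|\ge cm_0^{k+1/(2t-1)}$ from \eqref{eq:F} gives $\binom{m_0}{k}/|\mathcal{G}|\le c^{-1}m_0^{-1/(2t-1)}$, after which the choice of $m_0$ recorded in \eqref{eq:m} yields the claimed $(1-\epsilon/2)\binom{tk-1}{k-1}^{-1}$. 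I do not expect a genuine obstacle in this lemma; the only step demanding care is the exact count of shadow $k$-sets meeting $U$, as it is precisely this count that pins the leading constant to $\binom{tk-1}{k-1}^{-1}$ and makes the bound in (ii) match \eqref{eq:m}.
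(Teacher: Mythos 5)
Your proposal is correct and takes essentially the same approach as the paper: the pendant vertices $u_i$ give cancellativity, the $\ell^-$-freeness of $\mathcal{G}$ together with the $\ell$ distinct pendant vertices gives the $\ell^-$-freeness of $\mathcal{F}$, and the shadow bound $|\mathcal{J}(\mathcal{F})|\le\binom{m_0}{k}+|\mathcal{G}|\cdot\binom{tk-1}{k-1}$ combined with \eqref{eq:F} and \eqref{eq:m} yields (ii). Your write-up simply makes explicit the counting and the contradiction argument that the paper states as brief observations.
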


\begin{proof}
    $\mathcal{F}$ is clearly $2(t-1)$-cancellative since by definition, for every $i$, $u_i$ appears only in $G_i\cup \{u_{i}\}\in\mathcal{F}$ but not in any other $G_j\cup \{u_j\}\in\mathcal{F}$ for $i\neq j$. To show that $\mathcal{F}$ is $\ell^-$-free for all $2\le\ell\le 2t$, it suffices to notice that for any $\ell$ distinct edges of $\mathcal{F}$, say $\{G_i \cup \{u_{i}\} : i \in [\ell]\} \subseteq \mathcal{F}$, we have $$\left|\bigcup_{i=1}^{\ell} \left(G_i \cup \{u_{i}\}\right)\right| = \left|G_1 \cup \cdots \cup G_{\ell}\right| + \ell \geq \ell\cdot(tk - 1) - (\ell - 1)k + \ell = \ell\cdot tk - (\ell - 1)k,$$
    where the inequality follows from the $\ell^-$-freeness of $\mathcal{G}$. Lastly, the lower bound on $\frac{|\mathcal{F}|}{|\mathcal{J}(\mathcal{F})|}$ follows from \eqref{eq:F}, \eqref{eq:m}, and the observation that $|\mathcal{J}(\mathcal{F})|\le\binom{m_0}{k}+|\mathcal{F}|\cdot\binom{tk-1}{k-1}$.
\end{proof}

Putting Lemmas \ref{lem:canc-LB} and \ref{lem:canc-LB-F} together, next we prove the lower bound of \cref{thm:main-canc}.

\begin{proof}[Proof of \cref{thm:main-canc}, the lower bound]
    Let $\mathcal{F}\subseteq\binom{[m]}{tk}$ be defined as in \eqref{eq:canc-F}, where $m=m_0+|\mathcal{G}|$. Then $\mathcal{F}$ satisfies the conclusions of \cref{lem:canc-LB-F}. Let $\mathcal{J}(\mathcal{F})\subseteq\binom{[m]}{k}$ be the $k$-shadow of $\mathcal{F}$. Applying \cref{lem:conflict-free-matching} with $\mathcal{J}=\mathcal{J}(\mathcal{F})$ and $e=2t$, we conclude that for every $\epsilon>0$, there is an integer $n_0=n_0(m,\epsilon)=n_0(t,k,\epsilon)$ such that for all $n\ge n_0$, there is an induced $\mathcal{J}(\mathcal{F})$-packing $\mathcal{P}$ in $\binom{[n]}{k}$ that is $(\ell\cdot m-(\ell-1)k-1,\ell)$-free for all $2\le\ell\le 2t$, and moreover,
    \begin{align}\label{eq:|P|}
        |\mathcal{P}|\ge(1-\epsilon/2)\cdot\binom{n}{k}\big/|\mathcal{J}(\mathcal{F})|.
    \end{align}

    \noindent Clearly, $\mathcal{F}$ and $\mathcal{P}$ satisfy the assumptions of \cref{lem:canc-LB}. It implies that the $tk$-graph $\mathcal{H}(\mathcal{F})$ defined in \cref{construction} is $2(t-1)$-cancellative. Moreover, it follows from \cref{lem:canc-LB-F} (ii), \eqref{eq:|P|}, and \eqref{eq:H-sparse1} that
    \begin{align*}
        |\mathcal{H}(\mathcal{F})|=|\mathcal{P}|\cdot|\mathcal{F}|\ge(1-\epsilon)\cdot\binom{n}{k} \bigg/ \binom{tk-1}{k-1},
    \end{align*}

    \noindent as needed.
\end{proof}

\subsection{The upper bound}

\noindent The goal of this subsection is to prove the upper bound $C_{2(t-1)}(n,tk)\le\binom{n}{k} / \binom{tk-1}{k-1}$. Recall that for a hypergraph $\mathcal{F}$ and $T\subseteq V(\mathcal{F})$, we use $\deg_{\mathcal{F}}(T)$ to denote the number of edges in $\mathcal{F}$ that contain $T$. For a subset $X\subseteq V(\mathcal{F})$ and an integer $s\ge 0$, we denote $\mathcal{D}(\mathcal{F},X,s):=\{T\in\binom{X}{k}:\deg_{\mathcal{F}}(T)=s\}$ and $\mathcal{D}(\mathcal{F},X,\ge s):=\{T\in\binom{X}{k}:\deg_{\mathcal{F}}(T)\ge s\}$. If $X=V(\mathcal{F})$, we denote $\mathcal{D}(\mathcal{F},V(\mathcal{F}),s)=\mathcal{D}(\mathcal{F},s)$ and $\mathcal{D}(\mathcal{F},V(\mathcal{F}),\ge s)=\mathcal{D}(\mathcal{F},\ge s)$ for simplicity.

Let $\mathcal{F}\subseteq\binom{[n]}{tk}$ be a $2(t-1)$-cancellative $tk$-graph, we aim to show that
\begin{align}\label{eq:canc-upper}
    \binom{tk}{k}|\mathcal{F}|\le t\binom{n}{k}.
\end{align}

\noindent To that end, observe that
\begin{align}\label{eq:canc-upper-1}
    \binom{tk}{k}|\mathcal{F}|=\sum_{s\ge1} s\cdot|\mathcal{D}(\mathcal{F},s)|=\sum_{s\ge1}|\mathcal{D}(\mathcal{F},s)|+\sum_{s\ge2}(s-1)\cdot|\mathcal{D}(\mathcal{F},s)|,
\end{align}

\noindent and moreover,
\begin{align}\label{eq:canc-upper-2}
    \sum_{s\ge1}|\mathcal{D}(\mathcal{F},s)|\le \sum_{s\ge0}|\mathcal{D}(\mathcal{F},s)|=|\mathcal{D}(\mathcal{F},\ge 0)|=\binom{n}{k}.
\end{align}

\noindent Combining \eqref{eq:canc-upper-1} and \eqref{eq:canc-upper-2}, one can infer that to prove \eqref{eq:canc-upper}, it suffices to show that
\begin{align}\label{eq:canc-upper-3}
    \sum_{s\ge2}(s-1)\cdot|\mathcal{D}(\mathcal{F},s)|\le (t-1)\binom{n}{k}.
\end{align}

To prove \eqref{eq:canc-upper-3}, we will apply a result on the maximum number of edges that can be contained in a hypergraph with bounded matching number. Recall that a matching in a hypergraph $\mathcal{F}$ is a set of pairwise disjoint edges in $\mathcal{F}$. The {\it matching number} of $\mathcal{F}$, denoted by $\nu(\mathcal{F})$, is the largest cardinality of a matching in $\mathcal{F}$. Let $m(n,k,t)$ be the maximum number of edges that can be contained in an $n$-vertex $k$-graph with matching number at most $t$. Erd\H{o}s \cite{erdos1965problem} posed a well-known conjecture, commonly termed the Erd\H{o}s Matching conjecture, on the exact value of $m(n,k,t)$. This conjecture is still open. The interested read is referred to \cite{Frankl-Kupavskii-The-Erdos-Matching-Con} for its recent progress. For our purpose, we will make use of the following result of Frankl \cite{FRANKL80} and Kleitman \cite{kleitman1968maximal} on the special case of this conjecture.

\begin{lemma}[\cite{FRANKL80,kleitman1968maximal}, see also \cite{Frankl-Kupavskii-The-Erdos-Matching-Con}]\label{lem:matching-conjecture}
    $m((t-1)k,k,t-2)=\binom{(t-1)k-1}{k}$ for all positive integers $k\ge2,t\ge 2$.
\end{lemma}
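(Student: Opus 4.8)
The plan is to prove the two inequalities separately, identifying the extremal configuration as the ``clique-type'' family; this is the boundary case $n=(s+1)k$ of the Erd\H{o}s Matching Conjecture, which is exactly where an elementary averaging argument becomes available. Write $N=(t-1)k$, so that $m(N,k,t-2)$ is the maximum size of a family $\mathcal{A}\subseteq\binom{[N]}{k}$ with matching number at most $t-2$, equivalently a family containing no $t-1$ pairwise disjoint edges. For the lower bound I would exhibit $\mathcal{A}_0=\binom{[N]\setminus\{v\}}{k}$, the family of all $k$-sets avoiding one fixed vertex $v$. Any matching in $\mathcal{A}_0$ lives inside the $N-1=(t-1)k-1$ vertices of $[N]\setminus\{v\}$, so it has at most $\lfloor((t-1)k-1)/k\rfloor=t-2$ edges; hence $\nu(\mathcal{A}_0)\le t-2$ and $|\mathcal{A}_0|=\binom{N-1}{k}=\binom{(t-1)k-1}{k}$, which gives the lower bound.

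For the matching upper bound I would run a double-counting argument over perfect matchings of the complete $k$-graph on $[N]$. Since $N=(t-1)k$, a \emph{perfect matching} is just an unordered partition of $[N]$ into $t-1$ blocks of size $k$; let $\mathcal{M}$ denote the set of all such partitions. The key observation is that if $\nu(\mathcal{A})\le t-2$, then every $P\in\mathcal{M}$ must contain at least one block that is \emph{not} an edge of $\mathcal{A}$, since otherwise the $t-1$ blocks of $P$ would be $t-1$ pairwise disjoint edges of $\mathcal{A}$, a matching of size $t-1$, contradicting $\nu(\mathcal{A})\le t-2$. Counting incidences between partitions and their non-edge blocks therefore yields $\sum_{P\in\mathcal{M}}|\{\text{non-edge blocks of }P\}|\ge|\mathcal{M}|$.

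It then remains to evaluate both sides by symmetry. Every fixed $k$-set lies in exactly $\lambda:=\frac{(N-k)!}{(k!)^{t-2}(t-2)!}$ partitions (partition the remaining $(t-2)k$ vertices into $t-2$ blocks), while $|\mathcal{M}|=\frac{N!}{(k!)^{t-1}(t-1)!}$; so the left-hand side equals $\lambda\cdot|\overline{\mathcal{A}}|$, where $\overline{\mathcal{A}}=\binom{[N]}{k}\setminus\mathcal{A}$. Hence $|\overline{\mathcal{A}}|\ge|\mathcal{M}|/\lambda=\frac{1}{t-1}\binom{N}{k}$, and therefore $|\mathcal{A}|\le\big(1-\tfrac{1}{t-1}\big)\binom{N}{k}=\frac{t-2}{t-1}\binom{N}{k}$. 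The proof closes on the identity $\binom{N-1}{k}=\frac{N-k}{N}\binom{N}{k}=\frac{(t-2)k}{(t-1)k}\binom{N}{k}=\frac{t-2}{t-1}\binom{N}{k}$, so the upper bound matches the construction exactly.

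I expect the argument itself to be short; the substantive direction is the upper bound, and it is only this clean because $n=(t-1)k$ is precisely the threshold at which the clique-type and cover-type extremal families degenerate so that averaging over all perfect matchings is tight (the general conjecture is far harder). The two points needing care are the uniform count $\lambda$, which is what makes the incidence sum collapse to $\lambda|\overline{\mathcal{A}}|$, and the final binomial identity, which is what makes the averaging bound tight rather than merely of the correct order of magnitude. As a sanity check I would verify the boundary instance $t=2$, where $N=k$ and $\nu(\mathcal{A})\le 0$ forces $\mathcal{A}=\emptyset$, consistent with $\binom{k-1}{k}=0$ and with $\tfrac{1}{t-1}=1$ in the averaging step.
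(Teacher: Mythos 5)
Your proof is correct, and it is worth noting that the paper itself gives no argument for this lemma at all: it is quoted as a known result of Kleitman and Frankl (a solved special case of the Erd\H{o}s Matching Conjecture), so your write-up supplies a proof where the paper only supplies a citation. Your argument is the standard elementary one that becomes available exactly at the boundary $n=(t-1)k$, and every step checks out: a matching of size $t-1$ in $\binom{[(t-1)k]}{k}$ must be a perfect matching, so $\nu(\mathcal{A})\le t-2$ is equivalent to every partition of $[(t-1)k]$ into $k$-blocks containing a block outside $\mathcal{A}$; each $k$-set lies in exactly $\lambda=\frac{((t-2)k)!}{(k!)^{t-2}(t-2)!}$ of the $|\mathcal{M}|=\frac{((t-1)k)!}{(k!)^{t-1}(t-1)!}$ partitions, so the incidence count gives $|\overline{\mathcal{A}}|\ge|\mathcal{M}|/\lambda=\frac{1}{t-1}\binom{(t-1)k}{k}$; and the identity $\binom{(t-1)k-1}{k}=\frac{t-2}{t-1}\binom{(t-1)k}{k}$ makes the bound coincide with your lower-bound construction $\binom{[N]\setminus\{v\}}{k}$. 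Two small remarks. First, what you call the ``clique-type'' family (all $k$-sets avoiding a fixed vertex) coincides with the clique $\binom{[(t-1)k-1]}{k}$ only because $n$ sits exactly at the threshold $(t-1)k$; at this threshold the identification is harmless, but it is special to this case. Second, your closing observation is the right one to emphasize: the averaging over perfect matchings is tight precisely here, which is why this instance of the Erd\H{o}s Matching Conjecture is elementary while the general conjecture (which governs the constant $\gamma(r,t)$ elsewhere in the paper) remains open. Your proof could legitimately replace the citation if one wanted the paper to be self-contained.
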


With \cref{lem:matching-conjecture} at hand, we are ready to prove \eqref{eq:canc-upper-3}.

\begin{proof}[Proof of \cref{thm:main-union-free}, the upper bound]
    Let $\mathcal{F}\subseteq\binom{n}{tk}$ be a $2(t-1)$-cancellative $tk$-graph. According to the discussions above, to prove $|\mathcal{F}|\le\binom{n}{k} / \binom{tk-1}{k-1}$, it is enough to prove \eqref{eq:canc-upper-3}.

    Fix $s\ge 2$ and $T\in\mathcal{D}(\mathcal{F},s)$. Let $\{F_1,\ldots,F_s\}$ be the set of edges in $\mathcal{F}$ that contain $T$. For each $i\in[s]$, denote $\mathcal{D}(\mathcal{F},F_i\setminus T,\ge 2)=\{R\in\binom{F_i\setminus T}{k}:\deg_{\mathcal{F}}(R)\ge 2\}$. We have the following claim.

    \begin{claim}\label{claim:uf-UB}
        There are at least $s-1$ choices of $i\in[s]$ such that $\nu(\mathcal{D}(\mathcal{F},F_i\setminus T,\ge 2))\le t-2$.
    \end{claim}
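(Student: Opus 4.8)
The plan is to argue by contradiction, and the crucial structural observation I would exploit is that a matching of size $t-1$ inside the $(t-1)k$-vertex set $F_i\setminus T$ must be \emph{perfect}. Suppose the claim fails; then there are two distinct indices $i,j\in[s]$ with $\nu(\mathcal{D}(\mathcal{F},F_i\setminus T,\ge 2))\ge t-1$ and $\nu(\mathcal{D}(\mathcal{F},F_j\setminus T,\ge 2))\ge t-1$. Since $|F_i\setminus T|=(t-1)k$, any $t-1$ pairwise disjoint $k$-subsets of $F_i\setminus T$ exhaust all of its vertices, so the matching number is in fact exactly $t-1$ and each witnessing matching is a partition into $k$-sets. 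Thus I obtain $F_i\setminus T=R_1\sqcup\cdots\sqcup R_{t-1}$ and $F_j\setminus T=R_1'\sqcup\cdots\sqcup R_{t-1}'$ with $\deg_{\mathcal{F}}(R_\ell)\ge 2$ and $\deg_{\mathcal{F}}(R_\ell')\ge 2$ for every $\ell$.

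Since $F_i\supseteq R_\ell$ already accounts for one edge through $R_\ell$, for each $\ell$ I can select an edge $A_\ell\in\mathcal{F}$ with $R_\ell\subseteq A_\ell$ and $A_\ell\ne F_i$, and similarly $A_\ell'\in\mathcal{F}$ with $R_\ell'\subseteq A_\ell'$ and $A_\ell'\ne F_j$. I would then set $\mathcal{A}=\{A_1,\dots,A_{t-1},A_1',\dots,A_{t-1}'\}\setminus\{F_i,F_j\}$, a set of at most $2(t-1)$ distinct edges, none equal to $F_i$ or $F_j$. The heart of the argument is the containment $F_i\Delta F_j\subseteq\bigcup_{A\in\mathcal{A}}A$, which I verify by chasing a vertex $x\in F_i\Delta F_j$. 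If $x\in F_i\setminus F_j$, then $x\notin T\subseteq F_j$, so $x\in F_i\setminus T=\bigcup_\ell R_\ell$, whence $x\in R_\ell\subseteq A_\ell$ for some $\ell$; moreover $A_\ell\ne F_i$ by choice and $A_\ell\ne F_j$ because $x\in A_\ell$ while $x\notin F_j$, so $A_\ell\in\mathcal{A}$. The case $x\in F_j\setminus F_i$ is symmetric.

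To conclude, I would extend $\mathcal{A}$ arbitrarily to a family of exactly $2(t-1)$ distinct edges of $\mathcal{F}$ avoiding $F_i$ and $F_j$ (possible since $\mathcal{F}$ may be assumed to contain at least $2t$ edges, as otherwise the target bound holds trivially for large $n$). Renaming these as $A_1,\dots,A_{2(t-1)}$ and putting $B=F_i$, $C=F_j$ gives $2t$ distinct edges with $B\Delta C\subseteq\bigcup_{m=1}^{2(t-1)}A_m$, directly contradicting the $2(t-1)$-cancellativity of $\mathcal{F}$. This contradiction shows at most one index $i$ can have large matching number, proving the claim.

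The step I expect to require the most care is the distinctness bookkeeping in forming $\mathcal{A}$: one must rule out that an edge chosen to cover part of $F_i\setminus F_j$ secretly equals $F_j$ (or $F_i$). This is precisely why I phrase the vertex-chasing through $F_i\Delta F_j$ rather than through $F_i\setminus T$ — an edge equal to $F_j$ can only contain vertices of $F_j$ and therefore cannot cover any vertex of $F_i\setminus F_j$, so it is automatically excluded from the covering family.
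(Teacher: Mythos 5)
Your proof is correct and follows essentially the same route as the paper's: observe that a $(t-1)$-matching of degree-$\ge 2$ $k$-sets inside the $(t-1)k$-set $F_i\setminus T$ must partition it, pick for each block an alternative edge through it, and cover $F_i\Delta F_j$ by these edges to contradict $2(t-1)$-cancellativity. If anything, you are more careful than the paper on one point: the paper silently treats a cover of $F_1\Delta F_2$ by \emph{at most} $2t-2$ edges of $\mathcal{F}\setminus\{F_1,F_2\}$ as an immediate contradiction, whereas (as you note) the definition of cancellativity requires exactly $2t$ distinct edges, so one must pad the covering family, which is only possible when $|\mathcal{F}|\ge 2t$ --- a harmless assumption here, since otherwise the upper bound being proved holds trivially for large $n$.
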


    To prove the claim, suppose for contradiction that there are at least two choices of $i\in[s]$, say $i\in\{1,2\}$ such that
    \begin{align}\label{eq:matching-t-1}
        \nu(\mathcal{D}(\mathcal{F},F_i\setminus T,\ge 2))\ge t-1
    \end{align}

    \noindent As $|F_1|=tk$ and $|F_1\setminus T|=(t-1)k$, \eqref{eq:matching-t-1} implies that there exist $T_{1,1},\ldots,T_{1,t-1}\in\mathcal{D}(\mathcal{F},F_1\setminus T,\ge 2)\subseteq\mathcal{D}(\mathcal{F},\ge 2)$ such that $F_1$ can be written as the disjoint union $F_1=T\cup T_{1,1}\cup\cdots\cup T_{1,t-1}$. As $T_{1,j}\in\mathcal{D}(\mathcal{F},\ge 2)$ for each $j\in [t-1]$, there exist not necessarily distinct $F_{1,1},\ldots,F_{1,t-1}\in\mathcal{F}\setminus\{F_1\}$ such that $T_{1,j}\subseteq F_{1,j}$ for each $j\in [t-1]$. It follows that
    \begin{align}\label{eq:contradiction-1}
        F_1\subseteq F_2\cup F_{1,1}\cup\cdots\cup F_{1,t-1}.
    \end{align}

    \noindent Similarly, we can show that there exist not necessarily distinct $F_{2,1},\ldots,F_{2,t-1}\in\mathcal{F}\setminus\{F_2\}$ such that
    \begin{align}\label{eq:contradiction-2}
        F_2\subseteq F_1\cup F_{2,1}\cup\cdots\cup F_{2,t-1}.
    \end{align}

    \noindent Note that it is possible that $F_2\in\{F_{1,1},\ldots,F_{1,t-1}\}$ and $F_1\in\{F_{2,1},\ldots,F_{2,t-1}\}$. In any case, given \eqref{eq:contradiction-1} and \eqref{eq:contradiction-2}, we can infer that the symmetric difference $F_1\Delta F_2$ is contained at most $2t-2$ edges in $\mathcal{F}\setminus\{F_1,F_2\}$, a contradiction. This completes the proof of the claim.

    Let $S=\{i\in[s]:\nu(\mathcal{D}(\mathcal{F},F_i\setminus T,\ge 2))\le t-2\}$. By \cref{claim:uf-UB}, we have $|S|\ge s-1$. For each $i\in S$, it follows from \cref{lem:matching-conjecture} that
    $$|\mathcal{D}(\mathcal{F},F_i\setminus T,\ge 2)|\le m((t-1)k,k,t-2)=\binom{(t-1)k-1}{k},$$
    which further implies that for each $i\in S$,
    \begin{align}\label{eq:own-k-subsets}
        |\mathcal{D}(\mathcal{F},F_i\setminus T,1)|\ge\binom{(t-1)k}{k}-\binom{(t-1)k-1}{k}=\binom{(t-1)k-1}{k-1}.
    \end{align}

    Define a map $$\sigma:\mathcal{D}(\mathcal{F},\ge 2)\to 2^{\mathcal{D}(\mathcal{F},1)}$$ that maps each $T\in\mathcal{D}(\mathcal{F},\ge 2)$ to
    $$\sigma(T):=\{T'\in\mathcal{D}(\mathcal{F},1):\exists F\in \mathcal{F} \text{ such that } T\subseteq F \text{ and } T'\subseteq F/T \}.$$
    It follows from \eqref{eq:own-k-subsets} that for each $T\in\mathcal{D}(\mathcal{F},\ge 2)$,
    \begin{align}\label{eq:sigma-T}
        |\sigma(T)|=\sum_{T\subseteq F\in\mathcal{F}}|\mathcal{D}(\mathcal{F},F\setminus T,1)|\ge(\deg_{\mathcal{F}}(T)-1)\cdot \binom{(t-1)k-1}{k-1}.
    \end{align}

    Let us count the number of pairs $N:=|\{(T,T'):T\in\mathcal{D}(\mathcal{F},\ge 2),T'\in\sigma(T)\}|$ in two ways. On one hand, by \eqref{eq:sigma-T} we have
    \begin{align}\label{eq:N-lower-bound}
        N=\sum_{T\in\mathcal{D}(\mathcal{F},\ge 2)}|\sigma(T)|=\sum_{s\ge2}\sum_{T\in\mathcal{D}(\mathcal{F},s)}|\sigma(T)|\ge \sum_{s\ge2}|\mathcal{D}(\mathcal{F},s)|(s-1)\binom{(t-1)k-1}{k-1}.
    \end{align}

    \noindent On the other hand,
    \begin{equation}\label{eq:N-upper-bound}
        \begin{aligned}
        N&=\sum_{T'\in \mathcal{D}(\mathcal{F},1)}|\{T\in \mathcal{D}(\mathcal{F},\ge 2): \exists F\in \mathcal{F} \text{ such that } T\subseteq F \text{ and } T'\subseteq F/T \}|\\
         &\le |\mathcal{D}(\mathcal{F},1)|\binom{(t-1)k}{k}.
    \end{aligned}
    \end{equation}

    \noindent Combining \eqref{eq:N-lower-bound} and \eqref{eq:N-upper-bound}, we have
    $$\sum_{s\ge2}|\mathcal{D}(\mathcal{F},s)|(s-1)\binom{(t-1)k-1}{k-1}\le |\mathcal{D}(\mathcal{F},1)|\binom{(t-1)k}{k}.$$
    Rearranging gives that
    $$\sum_{s\ge2}(s-1)\cdot|\mathcal{D}(\mathcal{F},s)|\le (t-1)|\mathcal{D}(\mathcal{F},1)|\le (t-1)\binom{n}{k},$$
    completing the proof of \eqref{eq:canc-upper-3}.
\end{proof}

\section{Proof of \cref{thm:main-union-free}}\label{sec:thm:main-union-free}

\noindent In this section, we aim to prove \cref{thm:main-union-free}. Given the upper bound in \eqref{eq:ST-union-free}, it suffices to prove the lower bound of the limit in \cref{thm:main-union-free}, i.e., for every fixed $\epsilon>0$ there exists an $n_0=n_0(t,k,\epsilon)$ such that for all $n\ge n_0$, we have $U_{t+1}(n,tk)\ge(1-\epsilon)\cdot \binom{n}{k} / \binom{tk-1}{k-1}$.

To do so, we roughly adapt the same approach as in \cref{subsec:thm:main-canc-lower}. We first prove a lemma that shows that if $\mathcal{F}$ and $\mathcal{P}$ satisfy several required properties, then the hypergraph $\mathcal{H}(\mathcal{F})$ defined in \cref{construction} is $(t+1)$-union-free. Then we show the existence of such $\mathcal{F}$ and $\mathcal{P}$.

\begin{lemma}\label{lem:uf-LB}
    Let $\mathcal{F}$ be a $(t+1)$-union-free $tk$-graph with $|V(\mathcal{F})|=m$ and $\mathcal{J}(\mathcal{F})$ be its $k$-shadow. Let $\mathcal{P}=\{(V(\mathcal{J}_i),\mathcal{J}_i):i\in[|\mathcal{P}|]\}$ be an induced $\mathcal{J}(\mathcal{F})$-packing in $\binom{[n]}{k}$. Suppose that $\mathcal{F}$ and $\mathcal{P}$ further satisfy the following properties:
    \begin{itemize}
        \item [{\rm (i)}] $\mathcal{F}$ is $\ell^-$-free (i.e., $(\ell\cdot tk-(\ell-1)k-1,\ell)$-free) for all $2\le\ell\le 2t+2$;
        \item [{\rm (ii)}] $\mathcal{V}=\{V(\mathcal{J}_i):i\in[|\mathcal{P}|]\}$ is $\ell^-$-free (i.e., $(\ell\cdot m-(\ell-1)k-1,\ell)$-free) for all $2\le\ell\le 2t+2$.
    \end{itemize}
    \noindent Then the $tk$-graph $\mathcal{H}(\mathcal{F})$ defined in \cref{construction} is $t$-cover-free and $(t+1)$-union-free.
\end{lemma}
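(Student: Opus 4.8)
The plan is to prove both properties of $\mathcal{H}(\mathcal{F})$ through a case analysis driven by two ingredients. First, applying \cref{lem:H(F)} with $e=2t+2$ shows that $\mathcal{H}(\mathcal{F})$ is $\ell^-$-free for every $2\le\ell\le 2t+2$. Second, since $\mathcal{P}$ is an induced packing, \cref{claim:induced} gives that any edge lying in one copy $\mathcal{F}_i$ meets the vertex set of any other copy $\mathcal{F}_{i'}$ in at most $k-1$ vertices (and any two distinct edges meet in at most $k$ vertices). These let me control the union of any bounded collection of edges by recording which copy of $\mathcal{F}$ each edge belongs to. Since $\mathcal{F}$ is $(t+1)$-union-free, it is in particular $t$-cover-free by the Fact in the introduction, so throughout I would reduce statements about $\mathcal{H}(\mathcal{F})$ either to a forbidden $\ell^-$-configuration or to the corresponding statement inside a single copy $\mathcal{F}_i\cong\mathcal{F}$.

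For the $t$-cover-free conclusion I would argue by contradiction: suppose $B\subseteq\bigcup_{i=1}^{t}A_i$ for distinct edges $B,A_1,\dots,A_t$, put $B\in\mathcal{F}_{i_0}$ and $S=\{i:A_i\in\mathcal{F}_{i_0}\}$ with $s=|S|$. By \cref{claim:induced} each $A_i$ with $i\notin S$ covers at most $k-1$ vertices of $B\subseteq V(\mathcal{F}_{i_0})$, so the external edges cover at most $(t-s)(k-1)$ vertices of $B$. If $s=t$, then $B,A_1,\dots,A_t$ all lie in $\mathcal{F}_{i_0}$ and violate the $t$-cover-freeness of $\mathcal{F}$; if $s=0$, then $tk=|B|\le t(k-1)<tk$, impossible. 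For $1\le s\le t-1$ the edges $\{B\}\cup\{A_i:i\in S\}$ are $s+1$ distinct edges of $\mathcal{F}_{i_0}$ whose union has size at most $s\cdot tk+(t-s)(k-1)=(s+1)tk-sk-(t-s)\le (s+1)tk-sk-1$, an $(s+1)^-$-configuration contradicting the $\ell^-$-freeness of $\mathcal{H}(\mathcal{F})$.

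For $(t+1)$-union-freeness, suppose distinct families $\mathcal{A},\mathcal{B}\subseteq\mathcal{H}(\mathcal{F})$ with $|\mathcal{A}|,|\mathcal{B}|\le t+1$ satisfy $\bigcup_{A\in\mathcal{A}}A=\bigcup_{B\in\mathcal{B}}B=:W$. Choosing $A_0\in\mathcal{A}\setminus\mathcal{B}$, the containment $A_0\subseteq W=\bigcup_{B\in\mathcal{B}}B$ together with the $t$-cover-freeness just proved forces $|\mathcal{B}|=t+1$; symmetrically $|\mathcal{A}|=t+1$, with both $\mathcal{A}\setminus\mathcal{B}$ and $\mathcal{B}\setminus\mathcal{A}$ nonempty. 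The goal is then to \emph{localise} the violation to a single copy: writing $\mathcal{A}_i=\mathcal{A}\cap\mathcal{F}_i$ and $\mathcal{B}_i=\mathcal{B}\cap\mathcal{F}_i$, I would show $\bigcup\mathcal{A}_i=\bigcup\mathcal{B}_i$ inside each copy $\mathcal{F}_i$, so that the \emph{exact} $(t+1)$-union-freeness of $\mathcal{F}$ yields $\mathcal{A}_i=\mathcal{B}_i$ for every $i$, hence $\mathcal{A}=\mathcal{B}$, a contradiction. The relevant control is that $W\cap V(\mathcal{F}_i)$ equals $\bigcup\mathcal{A}_i$ up to the cross-copy vertices contributed by $\mathcal{A}$-edges lying in other copies, and that by the $\ell^-$-freeness of $\mathcal{V}$ (condition (ii)) the $r\le 2t+2$ involved copies overlap in at most $(r-1)k$ vertices in total.

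The hard part will be precisely this localisation. Because \cref{lem:H(F)} guarantees that no bounded family of edges of $\mathcal{H}(\mathcal{F})$ can have an abnormally small union, a union-free violation can never be exposed by a global size estimate alone (in contrast with the cover-free case, where the covered edge contributes too few new vertices); the contradiction must come from the exact union-freeness of $\mathcal{F}$ within one copy. The only obstruction to localisation is the set of vertices shared between distinct copies: the symmetric difference $\bigcup\mathcal{A}_i\,\triangle\,\bigcup\mathcal{B}_i$ is contained in these shared vertices, so, writing $P_{\mathcal{A},i}$ and $P_{\mathcal{B},i}$ for the cross-copy contributions of $\mathcal{A}$ and $\mathcal{B}$ to $V(\mathcal{F}_i)$, I must show that every shared vertex of $W$ is already covered inside its own copy, i.e.\ that $P_{\mathcal{A},i}\subseteq\bigcup\mathcal{A}_i$ and $P_{\mathcal{B},i}\subseteq\bigcup\mathcal{B}_i$. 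Establishing this redundancy, using that the induced property makes each pairwise shared vertex set have size at most $k$ and contain no edge, together with the $(r-1)k$ total-overlap bound, is the technical crux I would need to carry out carefully before the per-copy union-freeness of $\mathcal{F}$ can be invoked.
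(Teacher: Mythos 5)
Your first two steps are sound: the application of \cref{lem:H(F)} with $e=2t+2$ is exactly the paper's opening move, and your proof of $t$-cover-freeness is correct --- it is a mild variant of the paper's argument (the paper splits on whether $|\cup_{i=1}^{t+1}F_i|$ is strictly less than or equal to $t\cdot tk$, while you split on the number $s$ of covering edges lying in the copy containing the covered edge; both yield an $(s+1)^-$- or $(t+1)^-$-configuration or a violation inside one copy). The reduction of union-freeness to the case $|\mathcal{A}|=|\mathcal{B}|=t+1$ via $t$-cover-freeness also matches the paper.

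The genuine gap is the ``localisation'' step, which is the entire content of the union-free half of the lemma and which you explicitly leave unproven (``the technical crux I would need to carry out carefully''). You propose to show $\bigcup\mathcal{A}_i=\bigcup\mathcal{B}_i$ for every copy $\mathcal{F}_i$ and then invoke the $(t+1)$-union-freeness of $\mathcal{F}$, but the required redundancy $P_{\mathcal{A},i}\subseteq\bigcup\mathcal{A}_i$ is never established, and it is doubtful it can be proved head-on: in the critical configuration where $\mathcal{A}$ and $\mathcal{B}$ share $t$ edges $C_1,\ldots,C_t$ and differ in single edges $A_{t+1},B_{t+1}$ lying in a common copy $\mathcal{F}_i$ with $|A_{t+1}\cap B_{t+1}|=k$, the symmetric difference $A_{t+1}\Delta B_{t+1}$ is absorbed by the shared edges $C_j$, which may live in \emph{other} copies, so there is no a priori reason its vertices are covered ``inside their own copy.'' The paper never proves localisation; instead it runs a global counting argument: with $X,Y,Z$ the unions of the common part, the $\mathcal{A}$-part and the $\mathcal{B}$-part, it compares the upper bound $|X\cup Y\cup Z|\le (t+1)tk$ with the lower bound $(2t+2-p)tk-(2t+1-p)k$ coming from $(2t+2-p)^-$-freeness, which forces $p=t$ (any $t\ge 2$) or $(t,p)=(2,1)$; it then eliminates these two cases by further size estimates --- $(t+2)^-$-freeness plus \cref{claim:induced} reduce the case $p=t$ to $t=2$ with $|A_3\cap B_3|=k$, where the $3$-union-freeness of the copy (to place one $C_j$ outside it) and the $3^-$-freeness of $\mathcal{H}(\mathcal{F})$ give the contradiction, and in the case $(t,p)=(2,1)$, $5^-$-freeness forces pairwise disjointness and then $4^-$-freeness finishes. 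To complete your proposal you would either have to carry out essentially this case analysis anyway, or supply a genuinely new argument for the redundancy claim; as written, the $(t+1)$-union-freeness of $\mathcal{H}(\mathcal{F})$ is not proved.
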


\begin{proof}
    First of all, applying \cref{lem:H(F)} with $e=2t+2$ one can easily see that $\mathcal{H}(\mathcal{F})$ is $\ell^-$-free for all $2\le\ell\le 2t+2$.

    We proceed to show that $\mathcal{H}(\mathcal{F})$ is $t$-cover-free. If not, then there exist $t+1$ distinct edges, say $F_1,\ldots,F_{t+1}\in\mathcal{H}(\mathcal{F})$, such that $F_{t+1}\subseteq \cup_{i=1}^{t}F_i$. This implies that
    $$\left|\bigcup_{i=1}^{t+1}F_i\right|=\left|\bigcup_{i=1}^{t}F_i\right|\le t\cdot tk.$$

    \noindent If $|\cup_{i=1}^{t+1}F_i|< t\cdot tk=(t+1)\cdot tk-tk$, then we obtain a contradiction as $\mathcal{H}(\mathcal{F})$ is $(t+1)^-$-free.
    If $|\cup_{i=1}^{t}F_i|=t\cdot tk$, then $F_1,\ldots,F_t$ must be pairwise disjoint. Moreover, it follows from \cref{claim:induced} that $|F_{t+1}\cap F_i|\le k$ for all $i\in [t]$. Since $F_{t+1}\subseteq \cup_{i=1}^{t}F_i$, it is not hard to check that $|F_{t+1}\cap F_i|= k$ for all $i\in [t]$. It follows again from \cref{claim:induced} that $F_1,\ldots,F_{t+1}$ must be all contained in the same copy of $\mathcal{F}$, contradicting the assumption that $\mathcal{F}$ is $t$-cover-free.

    We proceed to show that $\mathcal{H}(\mathcal{F})$ is $(t+1)$-union-free. If not, then there exist distinct $\mathcal{A},\mathcal{B}\subseteq \mathcal{H}(\mathcal{F})$, with $1\le |\mathcal{A}|,|\mathcal{B}|\le t+1$, such that $\cup_{A\in \mathcal{A}}A=\cup_{B\in \mathcal{B}}B$. Since $\mathcal{H}(\mathcal{F})$ is $t$-cover-free, it is not hard to see that $\cup_{A\in \mathcal{A}}A=\cup_{B\in \mathcal{B}}B$ is only possible when $|\mathcal{A}|=|\mathcal{B}|=t+1$.
    Assume without loss of generality that
    $$\mathcal{A}=\{C_1,\ldots,C_p,A_{p+1},\ldots, A_{t+1}\},\mathcal{B}=\{C_1,\ldots,C_p,B_{p+1},\ldots,B_{t+1}\},$$
    where $\mathcal{A}\cap\mathcal{B}=\{C_1,\ldots,C_p\}$ and $0\le p\le t$. Denote $X=\cup_{i=1}^pC_i$, $Y=\cup_{i=p+1}^{t+1}A_i$, and $Z=\cup_{i=p+1}^{t+1}B_i$. As $X\cup Y=X\cup Z$, we have
    \begin{align}\label{eq-union-free}
        Y\Delta Z\subseteq X.
    \end{align}

    \noindent We now consider the size of $X\cup Y\cup Z$. By \eqref{eq-union-free}, we have
    \begin{align}\label{eq-union-free-upper}
        |X\cup Y\cup Z|=|X\cup Y|=|X\cup Z|\le (t+1)\cdot tk.
    \end{align}

    \noindent Note that $\mathcal{A}\cup\mathcal{B}$ consists of exactly $2t+2-p$ edges of $\mathcal{H}(\mathcal{F})$. Since $\mathcal{H}(\mathcal{F})$ is $(2t+2-p)^-$-free, we have
    \begin{align}\label{eq-union-free-lower}
        |X\cup Y\cup Z|\ge (2t+2-p)\cdot tk-(2t+2-p-1)k.
    \end{align}

    \noindent Comparing the upper and lower bounds of $|X\cup Y\cup Z|$ in \eqref{eq-union-free-upper} and \eqref{eq-union-free-lower}, it is not hard to observe that
    $$(t+1)\cdot tk\ge (2t+2-p)\cdot tk-(2t+2-p-1)k$$
    \noindent only if $t-\frac{1}{t-1}\le p\le t$, which happens when $t\ge 2$ and $p=t$ or $t=2$ and $p=1$.

    In the sequel, we will discuss these two cases separately.

    \paragraph{Case 1: $t\ge 2$ and $p=t$.} In this case, we have $Y=A_{t+1}$ and $Z=B_{t+1}$. By \cref{claim:induced}, we can always assume that $|A_{t+1}\cap B_{t+1}|=k-x$, where $x\ge 0$. Given that $Y\Delta Z=A_{t+1}\Delta B_{t+1}\subseteq X$, we have
    $$|X\cup A_{t+1}\cup B_{t+1}|\le |X|+|A_{t+1}\cap B_{t+1}|\le t\cdot tk + k-x.$$
    Noting that $\mathcal{H}(\mathcal{F})$ is $(t+2)^-$-free, we also have
    $$|X\cup A_{t+1}\cup B_{t+1}|\ge(t+2)\cdot tk-(t+1)k.$$
    Combining the two inequalities above, it is not hard to observe that
    $$t\cdot tk + k-x\ge (t+2)\cdot tk-(t+1)k$$
    only if $t=2$ and $x=0$. Now, $\mathcal{H}(\mathcal{F})$ is a $2k$-graph. Moreover, we have $C_1\cup C_2\cup A_3=C_1\cup C_2\cup B_3$ and $|A_3\cap B_3|=k$. It follows from \cref{claim:induced} that $A_3$ and $B_3$ belong to the same copy of $\mathcal{F}$, say $\mathcal{F}_i$. Since $\mathcal{F}_i$ is $3$-union-free, at least one of $C_1$ and $C_2$ does not belong to  $\mathcal{F}_i$. Assume without loss of generality that $C_1\notin \mathcal{F}_i$. Again, by \cref{claim:induced} we can infer that $|C_1\cap (A_3\Delta B_3)|\le |C_1\cap V(\mathcal{F}_i)|\le k-1$. As $A_3\Delta B_3\subseteq X=C_1\cup C_2$ and $|A_3\Delta B_3|=2k$, we have
    $$|C_2\cap (A_3\Delta B_3)|=|A_3\Delta B_3|-|C_1\cap (A_3\Delta B_3)|\ge 2k-(k-1)=k+1.$$
    Therefore, $$|C_2\cup A_3\cup B_3|\le|C_2|+|A_3\cup B_3|-|C_2\cap (A_3\Delta B_3)|\le 4k-1=3\cdot 2k-2k-1,$$
    which contradicts the fact that $\mathcal{H}(\mathcal{F})$ is $3^-$-free.

    \paragraph{Case 2: $t=2$ and $p=1$.} In this case, we have $C_1\cup A_2\cup A_3=C_1\cup B_2\cup B_3$. It follows by \eqref{eq-union-free-upper} that
    $$|C_1\cup A_2\cup A_3\cup B_2\cup B_3|\le 6k.$$
    On the other hand, since $\mathcal{H}(\mathcal{F})$ is $5^-$-free, we have
    $$|C_1\cup A_2\cup A_3\cup B_2\cup B_3|\ge 5\cdot 2k-4k=6k.$$
    Consequently,
    $$|C_1\cup A_2\cup A_3\cup B_2\cup B_3|=|C_1\cup A_2\cup A_3|=|C_1\cup B_2\cup B_3|=6k.$$
    Since $\mathcal{H}(\mathcal{F})$ is a $2k$-graph, the above equation implies that $C_1,A_2,A_3$ are pairwise disjoint, and so are $C_1,B_2,B_3$. This further implies that $A_2\cup A_3=B_2\cup B_3$. Therefore, we conclude that
    $$|A_2\cup A_3\cup B_2\cup B_3|=|A_2\cup A_3|\le 4k,$$
    which contradicts the fact that $\mathcal{H}(\mathcal{F})$ is $4^-$-free. This completes the proof.
\end{proof}

To construct $\mathcal{F}$ that satisfies the assumptions of \cref{lem:uf-LB}, we apply the same construction as in \eqref{eq:F}. The only difference is that, when constructing $\mathcal{G}\subseteq\binom{[m_0]}{tk-1}$, we apply \cref{lem:e^--free} with $e=2t+2$. This yields a $tk$-graph $\mathcal{F}\subseteq\binom{[m]}{tk}$ with $|\mathcal{F}|=|\mathcal{G}|\ge c'm_0^{k+\fr{1}{2t+1}}$,  where $m=m_0+|\mathcal{G}|$ and $c'=c'(tk-1,k,2t+2)$. Pick $m_0=m_0(t,k)$ so that
\begin{align*}
    \frac{1}{\frac{1}{c'}m_0^{-\frac{1}{2t+1}}+\binom{tk-1}{k-1}}\ge(1-\epsilon/2)\cdot\frac{1}{\binom{tk-1}{k-1}}.
\end{align*}

Similarly to \cref{lem:canc-LB-F}, we can prove the following lemma, whose proof is omitted.

\begin{lemma}\label{lem:uf-LB-F}
Let $\mathcal{F}$ be the $tk$-graph defined as above. Then
\begin{itemize}
    \item [{\rm (i)}] $\mathcal{F}$ is $(t+1)$-union-free and $\ell^-$-free for all $2\le\ell\le 2t$;
    \item [{\rm (ii)}] $\frac{|\mathcal{F}|}{|\mathcal{J}(\mathcal{F})|}\ge (1-\epsilon/2)\cdot\frac{1}{\binom{tk-1}{k-1}}$.
\end{itemize}
\end{lemma}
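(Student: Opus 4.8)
The plan is to copy the structure of the proof of \cref{lem:canc-LB-F} almost verbatim, the only change being that the auxiliary $(tk-1)$-graph $\mathcal{G}\subseteq\binom{[m_0]}{tk-1}$ is now supplied by \cref{lem:e^--free} with $e=2t+2$ in place of $e=2t$. This guarantees both that $\mathcal{G}$ is $\ell^-$-free (with respect to $r=tk-1$ and the shadow parameter $k$) for every $2\le\ell\le 2t+2$, and that $|\mathcal{G}|\ge c'm_0^{k+1/(2t+1)}$. As before, $\mathcal{F}=\{G_i\cup\{u_i\}:i\in[|\mathcal{G}|]\}$ attaches a private vertex $u_i\notin[m_0]$ to each $G_i$, and $|\mathcal{F}|=|\mathcal{G}|$.

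First I would dispatch the union-free property, which here is even simpler than the cancellative case: since each $u_i$ lies in the single edge $G_i\cup\{u_i\}$ and in no other edge of $\mathcal{F}$, the map $\mathcal{S}\mapsto\bigcup_{F\in\mathcal{S}}F$ is injective on subfamilies $\mathcal{S}\subseteq\mathcal{F}$. Indeed, if $\mathcal{A}\neq\mathcal{B}$ then some edge $G_i\cup\{u_i\}$ lies in their symmetric difference, say in $\mathcal{A}\setminus\mathcal{B}$, whence $u_i\in\bigcup_{A\in\mathcal{A}}A$ but $u_i\notin\bigcup_{B\in\mathcal{B}}B$; thus distinct subfamilies have distinct unions and $\mathcal{F}$ is $s$-union-free for every $s$, in particular $(t+1)$-union-free, with no restriction on the sizes of $\mathcal{A},\mathcal{B}$ needed. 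For the $\ell^-$-freeness I would reuse the one-line shadow count: for any distinct edges $G_1\cup\{u_1\},\dots,G_\ell\cup\{u_\ell\}$ the private vertices $u_1,\dots,u_\ell$ are distinct and lie outside $[m_0]$, so $|\bigcup_{i=1}^\ell(G_i\cup\{u_i\})|=|G_1\cup\cdots\cup G_\ell|+\ell\ge(\ell(tk-1)-(\ell-1)k)+\ell=\ell\cdot tk-(\ell-1)k$, where the inequality is exactly the $\ell^-$-freeness of $\mathcal{G}$. This gives $\mathcal{F}$ its $\ell^-$-freeness for all $2\le\ell\le 2t+2$; note that this is the range actually required by \cref{lem:uf-LB} and it subsumes the range printed in part (i).

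For part (ii) I would bound the $k$-shadow by partitioning its elements according to whether they meet $U=\{u_i\}$: the $k$-sets avoiding $U$ number at most $\binom{m_0}{k}$, whereas a $k$-set containing some $u_i$ must lie inside the unique edge $G_i\cup\{u_i\}$, hence equals $\{u_i\}\cup T'$ with $T'\in\binom{G_i}{k-1}$, contributing at most $|\mathcal{F}|\binom{tk-1}{k-1}$ in total. Therefore $|\mathcal{J}(\mathcal{F})|\le\binom{m_0}{k}+|\mathcal{F}|\binom{tk-1}{k-1}$, and dividing through by $|\mathcal{F}|\ge c'm_0^{k+1/(2t+1)}$ yields $\frac{|\mathcal{F}|}{|\mathcal{J}(\mathcal{F})|}\ge 1/\big(\frac{1}{c'}m_0^{-1/(2t+1)}+\binom{tk-1}{k-1}\big)\ge(1-\epsilon/2)/\binom{tk-1}{k-1}$ by the choice of $m_0$. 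I expect no genuine obstacle here, as the lemma is deliberately engineered to mirror \cref{lem:canc-LB-F}; the only things to watch are the bookkeeping with the shifted parameters ($e=2t+2$ and the matching exponent $k+1/(2t+1)$) and the fact that, unlike the cancellative verification, the union-free verification exploits no structural property of $\mathcal{G}$ beyond the private vertices.
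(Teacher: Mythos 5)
Your proposal is correct and is exactly the proof the paper intends: the paper omits the proof of this lemma precisely because it is the verbatim adaptation of the proof of \cref{lem:canc-LB-F}, with the private-vertex argument now giving union-freeness (in fact $s$-union-freeness for all $s$) instead of cancellativity, the same one-line shadow count giving $\ell^-$-freeness, and the same bound $|\mathcal{J}(\mathcal{F})|\le\binom{m_0}{k}+|\mathcal{F}|\binom{tk-1}{k-1}$ giving part (ii). Your observation that the construction (using $e=2t+2$ in \cref{lem:e^--free}) actually yields $\ell^-$-freeness for all $2\le\ell\le 2t+2$ is also right, and this is the range genuinely needed in \cref{lem:uf-LB}, so the bound ``$2t$'' printed in part (i) of the statement should indeed be read as $2t+2$.
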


\begin{proof}[Proof of \cref{thm:main-union-free}]
    As mentioned in the beginning of this section, to prove \cref{thm:main-union-free}, it suffices to prove $U_{t+1}(n,tk)\ge(1-o(1))\cdot \binom{n}{k} / \binom{tk-1}{k-1}$. Indeed, the proof is exactly the same with $C_{2(t-1)}(n,tk)\ge(1-o(1))\cdot \binom{n}{k} / \binom{tk-1}{k-1}$. The only difference is that, in the previous case we apply \cref{lem:conflict-free-matching} with $e=2t$, and here we use $e=2t+2$. We omit the details.
\end{proof}

\section{Concluding remarks}\label{sec:conclusion}

\noindent In this paper, we obtain asymptotically sharp bounds on $C_{2(t-1)}(n,tk)$ and $U_{t+1}(n,tk)$ for all $t\ge 2$ and $k\ge 2$. However, the determination of $C_t(n,r)$ and $U_t(n,r)$ for general $t$ and $r$ remains widely open. There are still huge gaps between the general lower and upper bounds on the Tur\'an exponents of these two functions (see \eqref{eq:ST-canc-bound}, \eqref{eq:cff-bound}, \eqref{eq:Shangguan-Tamo-union-free}). It would be very interesting to close or narrow these gaps.

We will end this paper by discussing some open cases about $C_t(n,r)$ for $t\in\{1,2,3\}$. It is known that $C_1(n,r)=\Theta(n^r)$, and by \cite{Katona-Nemetz-Simonovits-Turan-density}, the Tur\'an density $\lim_{n\rightarrow\infty}\frac{C_1(n,r)}{\binom{n}{r}}$ always exists. Until now, its exact value is still unknown for all $r\ge 5$ (see \cite{bollobas1974three,sidorenko1987maximal} for $r\in\{3,4\}$). We have determined the asymptotic value of $C_2(n,r)$ for all even $r$. For odd $r$, \cite{Furedi-2-canc,Shangguan-Tamo-canc-and-union-free} showed that $n^{k-o(1)}<C_2(n,2k-1)=O(n^{k})$ for all $k\ge 2$. F\"uredi \cite{Furedi-2-canc}  conjectured that $C_2(n,2k-1)=o(n^{k})$; but this conjecture may be notoriously hard as he also showed that $C_2(n,3)$ has the same order of magnitude as the extremal function of the famous $(7,4)$-problem for $3$-graphs (see Theorem 4.1 in \cite{Furedi-2-canc}). For $t=3$, one can show that $\Omega(n^{\frac{5}{4}})=C_3(n,3)=o(n^2)$, $\Omega(n^{\frac{7}{4}})=C_3(n,4)=O(n^2)$, and $\Omega(n^{2})=C_3(n,5)=o(n^3)$, where the upper bounds on $C_3(n,3)$ and $C_3(n,5)$ follow from a standard application of the hypergraph removal lemma (see \cite{Conlon-Fox-rem-lem-survey} for example), and the other bounds follow from \eqref{eq:ST-canc-bound}.

\section*{Acknowledgments}
\noindent This project is supported by the National Natural Science Foundation of China under Grant Nos. 12101364 and 12231014, and the Natural Science Foundation of Shandong Province under Grant No. ZR2021QA005.

{\small
\normalem
\bibliographystyle{plain}
\bibliography{mid-autumn}}

\appendix
\section{Proof of \cref{claim-concentration}}\label{appendix-concentration}

\noindent Before proving \cref{claim-concentration}, we first introduce the bounded differences inequality, a concentration inequality which bounds the deviation between the sampled value and the expected value of certain functions when they are evaluated on independent random variables.
\begin{lemma}[see Theorem 9.1.3 in \cite{zhaoyufei22prob}]\label{lem:BDI}
    Let $\Omega_1,\ldots,\Omega_s$ be finite subsets and $X_1\in\Omega_1,\ldots,X_s\in\Omega_s$ be independent random variables. Suppose that $f:\Omega_1\times\cdots\times\Omega_s\to\mathbb{R}$ satisfies
    $$|f(x_1,\ldots,x_s)-f(x_1',\ldots,x_s')|\le c_i,$$
    whenever $(x_1,\ldots,x_s)$ and $(x_1',\ldots,x_s')$ differ only on the $i$-th coordinate. Here $c_1,\ldots,c_s$ are constants. Then, the random variable $Z=f(X_1,\ldots,X_s)$ satisfies, for every $\lambda\ge0$, that
    $$\Pr[|Z-\mathbb{E}[Z]|\ge\lambda]\le2\exp{\left(\frac{-2\lambda^2}{c_1^2+\cdots+c_s^2}\right)}.$$
\end{lemma}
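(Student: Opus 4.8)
The plan is to prove this bounded differences inequality (McDiarmid's inequality) by the classical martingale method, reducing it to an Azuma--Hoeffding type estimate. Write $Z=f(X_1,\ldots,X_s)$ and $\mu=\mathbb{E}[Z]$, and introduce the Doob martingale obtained by revealing the coordinates one at a time. First I would set, for $0\le i\le s$,
$$Z_i=\mathbb{E}[Z\mid X_1,\ldots,X_i],$$
so that $Z_0=\mu$ and $Z_s=Z$; by the tower property this is a martingale with respect to the filtration generated by $X_1,\ldots,X_i$. Putting $D_i=Z_i-Z_{i-1}$, we obtain the telescoping decomposition $Z-\mu=\sum_{i=1}^s D_i$, so the whole task reduces to controlling the increments $D_i$.

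The key step is to show that, conditioned on $X_1,\ldots,X_{i-1}$, the increment $D_i$ lies in an interval of length at most $c_i$. To see this I would write $Z_i=g_i(X_1,\ldots,X_i)$, where $g_i(x_1,\ldots,x_i)=\mathbb{E}[f(x_1,\ldots,x_i,X_{i+1},\ldots,X_s)]$ averages out the unrevealed coordinates; this is well defined because the $X_j$ are independent. Fixing $x_1,\ldots,x_{i-1}$, I would set
$$a_i=\inf_{y}\,g_i(x_1,\ldots,x_{i-1},y)-g_{i-1}(x_1,\ldots,x_{i-1}),\qquad b_i=\sup_{y}\,g_i(x_1,\ldots,x_{i-1},y)-g_{i-1}(x_1,\ldots,x_{i-1}),$$
so that $D_i\in[a_i,b_i]$ on this conditioning. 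The point is that $g_i(x_1,\ldots,x_{i-1},y)-g_i(x_1,\ldots,x_{i-1},y')$ is an average, over the remaining coordinates, of differences of $f$ evaluated on inputs that disagree only in coordinate $i$, each bounded by $c_i$ in absolute value by hypothesis; hence $b_i-a_i\le c_i$. I expect this to be the main obstacle, since it is precisely where the bounded differences hypothesis is used, and one must be careful that $g_{i-1}$ is itself the conditional average of $g_i(x_1,\ldots,x_{i-1},X_i)$, which guarantees $\mathbb{E}[D_i\mid X_1,\ldots,X_{i-1}]=0$.

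Next I would invoke Hoeffding's lemma: for a random variable $W$ with $\mathbb{E}[W]=0$ lying almost surely in an interval of length $L$, one has $\mathbb{E}[e^{tW}]\le e^{t^2 L^2/8}$ for all real $t$ (this follows from convexity of $w\mapsto e^{tw}$ and optimising the resulting bound). Applying it conditionally on $X_1,\ldots,X_{i-1}$ to $D_i$, which has conditional mean zero and deterministic conditional range at most $c_i$, gives $\mathbb{E}[e^{tD_i}\mid X_1,\ldots,X_{i-1}]\le e^{t^2 c_i^2/8}$. Iterating this bound from $i=s$ down to $i=1$ via the tower property, the conditional moment generating functions multiply, yielding
$$\mathbb{E}\!\left[e^{t(Z-\mu)}\right]=\mathbb{E}\!\left[e^{t\sum_{i=1}^s D_i}\right]\le\exp\!\left(\frac{t^2}{8}\sum_{i=1}^s c_i^2\right).$$

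Finally I would finish with the standard Chernoff argument. By Markov's inequality, $\Pr[Z-\mu\ge\lambda]\le e^{-t\lambda}\,\mathbb{E}[e^{t(Z-\mu)}]\le\exp(-t\lambda+\tfrac{t^2}{8}\sum_i c_i^2)$ for every $t\ge 0$; choosing $t=4\lambda/\sum_i c_i^2$ to minimise the exponent gives $\Pr[Z-\mu\ge\lambda]\le\exp(-2\lambda^2/\sum_i c_i^2)$. Running the identical argument for $-f$, whose increments obey the same bounded differences condition with the same constants $c_i$, controls the lower tail $\Pr[Z-\mu\le-\lambda]$, and a union bound over the two tails produces the factor $2$ and the claimed two-sided estimate. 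The only technical care needed is that Hoeffding's lemma be applied conditionally, but since the range bound $c_i$ is deterministic this presents no difficulty.
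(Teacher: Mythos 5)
Your proof is correct. The paper itself gives no proof of \cref{lem:BDI} --- it is quoted as Theorem 9.1.3 of \cite{zhaoyufei22prob} --- and the proof in that source is exactly the argument you present: the Doob martingale $Z_i=\mathbb{E}[Z\mid X_1,\ldots,X_i]$, the conditional range bound $b_i-a_i\le c_i$ on each increment (where, as you rightly note, independence is what lets one write $Z_i$ as an unconditional average $g_i$ over the unrevealed coordinates), Hoeffding's lemma applied conditionally, and the Chernoff optimization $t=4\lambda/\sum_i c_i^2$, with the two-sided bound from applying the one-sided estimate to $-f$.
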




\begin{proof}[Proof of \cref{claim-concentration}]
    For every $K\in V(\mathcal{H})\subseteq \binom{[n]}{k}$, define $\mathcal{J}(K)$ as the collection of all copies $\mathcal{J}'$ of $\mathcal{J}$ in $\binom{[n]}{k}$ that contains $K$, i.e.
    $$\mathcal{J}(K)=\left\{\mathcal{J}'\subseteq \binom{[n]}{k}: \mathcal{J}'\text{ is a copy of } \mathcal{J} \text{ and } K\in \mathcal{J}'\right\}.$$
    Given $K\in \mathcal{J}'$, there are $\binom{n-k}{m-k}$ choices for $V(\mathcal{J}')/K$. Therefore,
    $$|\mathcal{J}(K)|= \lambda_{\mathcal{J}}\binom{n-k}{m-k},$$
    where $\lambda_{\mathcal{J}}$ denotes the number of non-equivalent embedding of $\mathcal{J}$ into $\binom{[m]}{k}$ with one edge fixed. According to the definition of $\mathcal{H}$, it follows that
    $$\deg_{\mathcal{H}}(K)=|\{\mathcal{J}'\in \mathcal{J}(K): \mathcal{J}' \text{ forms an edge of } \mathcal{H}\}|=\sum_{\mathcal{J}'\in \mathcal{J}(K)}Y_{\mathcal{J}'}, $$
    where $Y_{\mathcal{J}'}$ is the indicator random variable for the event that $\mathcal{J}'$ forms an edge of $\mathcal{H}$. Note that given $K\in V(\mathcal{H})$,
    $$\Pr[Y_{\mathcal{J}'}=1]=(1-\epsilon)^{|\mathcal{J}|-1}\epsilon^{\binom{m}{k}-|\mathcal{J}|}. $$
    For simplicity, let $p=(1-\epsilon)^{|\mathcal{J}|-1}\epsilon^{\binom{m}{k}-|\mathcal{J}|}$. Hence, $$d:=\mathbb{E}[\deg_{\mathcal{H}}(K)]=\sum_{\mathcal{J}'\in \mathcal{J}(K)}\Pr[Y_{\mathcal{J}'}=1]
    =p\cdot \lambda_{\mathcal{J}}\binom{n-k}{m-k}=cn^{m-k},$$
    where $c$ is constant depending on $\mathcal{J}$, $\epsilon$, $m$ and $k$.

    Recall the definition of $X_K$, the indicator random variable for the event that $K\in\binom{[n]}{k}$ is colored blue. For a fixed $K\in V(\mathcal{H})$, $\deg_{\mathcal{H}}(K)$ can be viewed as a function of the variables $\{X_T:T\in \binom{[n]}{k}/\{K\}\}$, i.e.,
    $$\deg_{\mathcal{H}}(K)=f\left(X_T:T\in \binom{[n]}{k}/\{K\}\right).$$
    Next, we will prove that $\deg_{\mathcal{H}}(K)$ is concentrated around its expectation via \cref{lem:BDI}. For every $T\in \binom{[n]}{k}/\{K\}$ with $|T\cap K|=i\in\{0,1,\ldots,k-1\}$, if we change the color of a such $T$, then $c_T=O(n^{m-2k+i})$ since there are at most $\binom{n-(2k-i)}{m-(2k-i)}$ choices for the remaining vertices in an edge of $\mathcal{H}$ containing $T$ and $K$. Therefore, we have that
    $$\sum_{T\in \binom{[n]}{k}/\{K\}}c_T^2=\sum_{i=0}^{k-1}\sum_{T:|T\cap K|=i}c_T^2=\sum_{i=0}^{k-1}O(n^{k-i+2(m-2k+i)})=O(n^{2m-2k-1}),$$ where the second equality follows the fact that the number of $T$ such that $|T\cap K|=i$ is at most $\binom{k}{i}\binom{n-k}{k-i}=O(n^{k-i})$.

    Applying \cref{lem:BDI} with $\deg_{\mathcal{H}}(K)=f\left(X_T:T\in \binom{[n]}{k}/\{K\}\right)$, we have that
    \begin{align*}
        \Pr[\deg_{\mathcal{H}}(K)\not=(1\pm d^{-\epsilon})d]&=\Pr[|\deg_{\mathcal{H}}(K)-d|\ge d^{1-\epsilon}]\\
        &\le2\exp{\left(-\frac{2d^{2(1-\epsilon)}}{\sum_{T\in \binom{[n]}{k}/\{K\}}c_T^2}\right)}\\
        &=2\exp{\left(-\Theta\left(\frac{n^{2(1-\epsilon)(m-k)}}{n^{2(m-k)-1}}\right)\right)}\\
        &=2\exp{\left(-\Theta\left(n^{1-2\epsilon(m-k)}\right)\right)}.
    \end{align*}
    Hence,
    \begin{align*}
        \Pr\left[\forall K\in V(\mathcal{H}), \deg_{\mathcal{H}}(K)=(1\pm d^{-\epsilon})d\right]
        &=1-\Pr\left[\exists K\in V(\mathcal{H}), \deg_{\mathcal{H}}(K)\ne(1\pm d^{-\epsilon})d\right]\\
        &\ge 1-\sum_{K\in V(\mathcal{H})}\Pr[\deg_{\mathcal{H}}(K)\ne(1\pm d^{-\epsilon})d]\\
        &\ge 1-\binom{n}{k}\cdot 2\exp{\left(-\Theta\left(n^{1-2\epsilon(m-k)}\right)\right)}\\
        &=1-2\exp{\left(\Theta\left(k\ln{n}-n^{1-2\epsilon(m-k)}\right)\right)},
    \end{align*}
   which tends to $1$ as $n\to\infty$. In other words, every vertex in $\mathcal{H}$ is contained in $(1\pm d^{-\epsilon})d$ edges with high probability, completing the proof of the claim.
\end{proof}

\end{document}